\newdimen\plusheight
\def\+{\;\lower\plusheight\hbox{$+$}\;}
\newdimen\minusheight
\def\-{\;\lower\minusheight\hbox{$-$}\;}
\newdimen\cdotsheight
\def\cds{\lower\cdotsheight\hbox{$\cdots$}}
\numberwithin{equation}{section}
\theoremstyle{plain}
\newtheorem{theorem}{Theorem}[section]
\newtheorem{lemma}{Lemma}[section]
\newtheorem{example}{Example}[section]
\newtheorem{definition}{Definition}[section]
\newtheorem{proposition}{Proposition}[section]
\newtheorem{remark}{Remark}[section]
\newtheorem{note}{Note}[section]
\def\mytitle#1{\setcounter{equation}{0}
\setcounter{footnote}{0}
\begin{flushleft}\Large\textbf{#1}\end{flushleft}
\vspace{0.20cm}}
\def\myname#1{\leftline{{\large #1}}\vspace{-0.13cm}}
\def\myplace#1#2{\small\begin{flushleft}\textit{#1}\\
\texttt{#2}\end{flushleft}}
\def\myclassification#1{\small\noindent
Keywords : $I^K_\nu$-Convergence, AP$(I,K)$ Condition, $(I\vee K)^K_\nu$-Convergence, $I^K_\nu$-Cauchy, $I^K_\nu$-Limit Points.
       #1\vspace{0.5cm}#1\vspace{0.5cm}\\
       AMS subject classification$(2020)$: Primary: 54E70; Secondary: 40A05; 40A35}
\begin{document}
\mytitle{$I^K_{\nu}$-Convergence of functions in probabilistic normed spaces}

\myname{$Amar Kumar Banerjee^{\dag}$\footnote{akbanerjee@math.buruniv.ac.in, akbanerjee1971@gmail.com} and  $Mahendranath~ Paul^{\dag}$\footnote{mahendrabktpp@gmail.com}}
\myplace{$\dag$Department of Mathematics, The University of Burdwan, Purba Burdwan -713104, India.} {}

\begin{abstract}
In this paper we study $I^K$-convergence of functions with respect to probabilistic norm $\nu$ which is a generalization of $I^*_{\nu}$-convergence in probabilistic norm spaces. We also study on $I^K$-Cauchy functions and $I^K$-limit points with respect to  probabilistic norm $\nu$ in the same space. 
\end{abstract}
\myclassification{}
\section{Introduction and background}
Menger \cite{MK} initially introduced the concept of probabilistic metric spaces in 1942 as a captivating and significant extension of a metric space. He referred to it as a statistical metric. Subsequently, Schweizer and Sklar \cite{SB, SB1} delved deeper into this subject and conducted further investigations on it. In the theory of probabilistic metric spaces, the usual concept of distance takes a probabilistic nature. This led to the development of the idea of probabilistic metric spaces. In \cite{SB, SB1} Schweizer and Sklar proposed the probabilistic metric replacing the usual distance measure $d(\alpha,\beta)$ by a real-valued function $F_{\alpha\beta}$. The function $F_{\alpha\beta}(\gamma)$ represents the probability that the distance between elements $\alpha$ and $\beta$ is less than a given value $\gamma$. In 1962, Sherstnev \cite{SA} introduced the concept of probabilistic normed spaces, which serves as a generalization of the established results in linear normed spaces. He highlighted the significance of probabilistic normed spaces and concluded that they represent an important family within the domain of probabilistic metric spaces. Since then, numerous authors have dedicated their research efforts to study this concept. For more detailed information, please refer to \cite{GLB, HB}.
On the other side in the early sixties of twentieth century there was considerable attention given to the generalization of convergence of sequences. The concept of convergence for real sequences so called usual convergence, was extended to statistical convergence by H. Fast \cite{FH} as well as independently by H. Steinhaus \cite{SH}. Subsequently, this notion was further developed by various authors such as \cite{BM,BA7,FJA1,KP0, MuM1}. In the year 2000, the notion of $\lambda$-statistical convergence was introduced by Mursaleen in \cite{MuM2} as an extension of statistical convergence. Another notable generalization  of statistical convergence is the concept of ideal convergence (namely $I$ and $I^*$-convergence) which is based on the structure of ideals formed by subsets of the natural numbers introduced by P.Kostyrko \cite{KP1} in the beginning of twenty first century. Subsequently the concept of $I$-convergence was further extended by numerous authors  to the metric spaces, normed linear spaces, cone metric spaces, $S$-metric spaces etc. In $2005$, B. K. Lahiri and P. Das \cite{LB2} extended the notion of $I$ and $I^*$-convergence in a topological space and they revealed that the fundamental properties of these convergence are also preserved within the context of topological spaces.  The notion of $I$-Cauchy sequences was first introduced by K. Dems in 2004 in \cite{DmK}. Subsequently, in 2007, A. Nabiev et al. \cite{NA} conducted further investigations on $I^*$-Cauchy sequences. Since then, this concept has garnered significant attention from various authors. Notable references on this topic include \cite{BA8, DP1, DP2, DP3, DP5, PS}.\\
In 2007, S. Karakus \cite{KS} extended the concept of statistical convergence to probabilistic normed spaces (in short PNS or PN Space) and observed that certain fundamental properties of statistical convergence are preserved in probabilistic normed spaces but some  properties do not hold in such spaces due to the presence of the triangle function. M. Musaleen et al. \cite{MuM} and Rahamat et al. \cite{RSRM} defined and studied the concept of ideal convergence in probabilistic normed spaces. V. Kumar et al. \cite{KV} introduced the notion of $I$-Cauchy and $I^*$-Cauchy sequences in probabilistic normed spaces.\\
In the year 2010, M. Macaj and M. Sleziak \cite{MM1} proposed the notion of  $I^K$-convergence in a topological space where $I$ and $K$ both are ideals of an arbitrary non empty set $S$ and shown that this types of convergence serves as a unified generalization that encompasses various types of $I$-convergence and $I^*$-convergence under certain restrictions. They also introduced a modified version of the AP condition \cite{KP1}, referred to as the AP$(I,K)$ condition, in their work. In the year 2014, P. Das et al. \cite{DP4} studied on $I^K$-Cauchy functions. Notable references on this topic include \cite{BA0, BA0.1, BA0.2}. \\
In recent years, several authors have explored the generalizations of convergence through the use of ideal in the context of probabilistic normed spaces. Given the advancements in this area, it appears reasonable to consider extending these investigations to include double ideals within the same space. In doing so, it becomes necessary to investigate the extent to which various fundamental properties, including limit points, cluster points, and the Cauchy condition, are affected by this extension. In this paper we study on $I^K$-convergence of functions with respect to probabilistic norm $\nu$ in a probabilistic normed space which also generalizes the idea of $I_\nu^*$-convergence in \cite{MuM}. Due to the inherent importance of convergence in probabilistic normed spaces (PNS) for probabilistic analysis, we recognize that the concept of convergence via double ideals in a PNS would provide a more comprehensive framework for analyzing PNS. This extension would offer a broader scope and enhance our understanding of convergence in PNS.

\section{Basic Definitions and Notation}
Let us revisit some fundamental definitions for our present discussion. Let $\mathbb{R}_0^+=\{x\in \mathbb{R}: x\geq 0\}$ then  distribution function is defined as follows:
\begin{definition} \cite{SB1}
	A non-decreasing function $F:\mathbb{R}\rightarrow \mathbb{R}_0^+$ with $F(-\infty)=0$ and $F(\infty)=1$ is called distribution function if it is left continuous.
\end{definition}
The notation $\bigtriangleup$ denotes the set of all left continuous distribution functions.
\begin{definition} \cite{SB}
	A triangular norm ( in short t-norm) is a binary operation $*:[0,1]\times[0,1]\rightarrow [0,1]$ that fulfills the following conditions for all $a,b,c,d\in [0,1]$\\
	(1) $*$ has neutral element i.e. $a*1=a$\\
	(2) $*$ is  commutative i.e. $a*b=b*a$\\
	(3) $*$ is non decreasing i.e. $c*d\geq a*b$ if $c\geq a$ and $d\geq b$\\
	(4) $*$ is associative i.e. $(a*b)*c=a*(b*c)$
	
\end{definition}
\begin{example}
	The $*$ operation $s*t=$ max$\{s+t-1\}$, $s*t=st$ and $s*t=$ min $\{s,t\}$ on $[0,1]$ are $t$-norm.
\end{example}
\begin{definition} \cite{SB}
	A triplet $(P,\nu, *)$ is called a probabilistic normed space (for short PNS or PN Space) if $P$ is a real linear space and $\nu:P\rightarrow \bigtriangleup$ is a mapping (for $x\in P$, the distribution function(d.f) $\nu(x)$ is denoted by $\nu_x$ and $\nu_x(s)$ is the value of $\nu_x$ at $s \in \mathbb{R}$) and * is a $t$-norm and $\nu$ fulfills the following conditions:\\
	(i) $\nu_x(0)=0$\\
	(ii) $\nu_x(t)=1$ for all $t>0$ iff $x=0$\\
	(iii) $\nu_{\alpha x}(t)=\nu_x(\frac{1}{|\alpha|})$ for all $\alpha \in \mathbb{R}\setminus \{0\}$ and for all $t>0$.\\
	(iv) $\nu_{x+y}(s+t)\geq \nu_x(s)*\nu_y(t)$ for all $x,y\in P$ and $s,t\in \mathbb{R}$.
\end{definition}
\begin{definition}\label{6chd1}
	A function $f:S\rightarrow P$ is said to be convergent to $\xi$ with respect to probabilistic norm $\nu$ if for each $\epsilon(>0)$ and $t(>0)$ the set
	$A(\epsilon, t)=\{s\in S:\nu_{f(s)-\xi}(t)\leq 1-\epsilon\}$ is a finite set.
\end{definition}

\begin{definition}
	Let $S$ be a non-void set then a family of sets $I\subset 2^S$ is said to be an ideal if 
	\item (i) $A,B\in I \Rightarrow A\cup B\in I$
	\item(ii) $A\in I, B\subset A \Rightarrow B\in I$
\end{definition}
$I$ is called nontrivial ideal if $S\notin I$ and $ I\neq \{\phi\} $. In view of condition (ii), it is clear that $\phi\in I $. If $I \subsetneq 2^S$ we say that $I$ is proper ideal on $S$. Several examples of non-trivial ideals are seen in \cite{KP1}.
A nontrivial ideal $I$ is called admissible if it contains all the singleton of $\mathbb N$. A nontrivial ideal $I$ is called non-admissible if it is not admissible.
An example of an admissible ideal on a set $S$ is the ideal of all finite subsets of $S$ which we shall denote by Fin$(S)$. If $S=\mathbb N$ then we write Fin instead of Fin$(\mathbb N)$ for short. If $f$ is Fin-convergent to $p$ with respect to probabilistic norm $\nu$, then we denote Fin$_\nu(S)$-$\lim f=\xi$. 
\begin{example}
	Let $I$ be the class of all $A\subset \mathbb N$ with $d(A)=0$. Then $I$ is an admissible ideal of $\mathbb{N}$, since singleton sets has density zero. For any proper subset $M\subset \mathbb N$, $I=2^M$ is an non-admissible ideal of $\mathbb{N}$.
\end{example}
\begin{note}
	The dual notion to the ideal is the notion of the filter i.e. a filter on $S$ is non-void system of subsets of $S$, which is closed under finite intersection and supersets.
	If $I$ is a non-trivial ideal on $S$ then $F=F(I)=\{E\subset S:S\setminus E \in I \}$ is clearly a filter on $S$ and conversely. The filter $F(I)$ is called associated filter with respect to ideal $I$.
\end{note}
\begin{definition} \cite{DP4}
	Let $I,K$ be ideals on the non-empty set $S$. We say that $I$ has additive property with respect to $K$ or that the condition AP$(I,K)$ holds if for every sequence of pairwise disjoint sets $A_n\in I$, there exists a sequence $B_n\in I$ such that $A_n\bigtriangleup B_n\in K$ for each $n$ and $\displaystyle \cup_{n\in \mathbb N}B_n\in I$.
\end{definition}
\section{$I_\nu^K$-Convergence of functions}
Throughout  the paper $P$ stands for a probabilistic norm space(briefly PNS or PN Space) and we always assume that  $I$, $K$ are non-trivial ideals of a non empty set $S$ unless otherwise stated. First we give the definition of $I$-convergence of functions instead of sequences in PNS.
\begin{definition}
	Let $I$ be an ideal on a non-empty set $S$ and $(P,\nu,*)$ be a probabilistic normed space. A function $f:S\rightarrow P$ is said to be $I$-convergent with respect to norm $\nu$ to $p\in P$ if for each $\epsilon (>0)$ and $t(>0)$
	$$\{s\in S:\nu_{f(s)-p}(t)\leq 1-\epsilon \}\in I$$ .
\end{definition}
We denote $I_\nu$-$ \lim f=p$, $p$ is called the $I$-limit of the function $f$ with respect to probabilistic norm $\nu$. If $I$=Fin($\mathbb{N}$), then $I_\nu$-convergence coincides with the Definition \ref{6chd1} of usual convergence with respect to probabilistic norm $\nu$.
\begin{theorem}
	Let $(P,\nu,*)$ be a probabilistic normed space and $f:S\rightarrow P$ be a function. Then the following statements are equivalent:
	\item (i) $I_\nu$-$\lim f=p$
	\item (ii) $\{s\in S:\nu_{f(s)-p}(t)> 1-\epsilon \}\in F(I)$.
	\item (iii) $I-\lim \nu_{f(s)-p}(t)=1$
\end{theorem}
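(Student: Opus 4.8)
The plan is to pass between the three formulations purely by set-theoretic duality and by unwinding the definition of $I$-convergence of the real-valued auxiliary function $s\mapsto\nu_{f(s)-p}(t)$; no analytic input is needed beyond the elementary fact that every member of $\bigtriangleup$ takes values in $[0,1]$. Throughout, the statements (ii) and (iii) are of course understood to be asserted for every $\epsilon(>0)$ and $t(>0)$.

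First I would establish (i)$\Leftrightarrow$(ii). Fix $\epsilon>0$ and $t>0$ and write $A(\epsilon,t)=\{s\in S:\nu_{f(s)-p}(t)\le 1-\epsilon\}$. Its complement in $S$ is exactly $\{s\in S:\nu_{f(s)-p}(t)>1-\epsilon\}$. By the definition of the filter $F(I)$ associated with $I$ (a set lies in $F(I)$ iff its complement lies in $I$), we have $A(\epsilon,t)\in I$ if and only if $S\setminus A(\epsilon,t)\in F(I)$. Since this holds for every choice of $\epsilon$ and $t$, statements (i) and (ii) are literally restatements of one another.

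Next I would establish (i)$\Leftrightarrow$(iii). Fix $t>0$ and consider the real-valued function $g_t\colon S\to[0,1]$ given by $g_t(s)=\nu_{f(s)-p}(t)$; the codomain is $[0,1]$ because each member of $\bigtriangleup$ is non-decreasing with $F(-\infty)=0$ and $F(\infty)=1$. By definition, $I\text{-}\lim g_t=1$ means that for every $\delta>0$ the set $\{s\in S:|g_t(s)-1|\ge\delta\}$ belongs to $I$. Since $g_t(s)\le 1$ for all $s$, we have $|g_t(s)-1|=1-g_t(s)$, so $\{s\in S:|g_t(s)-1|\ge\delta\}=\{s\in S:\nu_{f(s)-p}(t)\le 1-\delta\}$, which is precisely the set appearing in (i) with $\delta$ playing the role of $\epsilon$. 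Hence, for each fixed $t$, ``$I\text{-}\lim\nu_{f(s)-p}(t)=1$'' is equivalent to ``for every $\epsilon>0$, $\{s\in S:\nu_{f(s)-p}(t)\le 1-\epsilon\}\in I$'', and quantifying over $t>0$ yields (i)$\Leftrightarrow$(iii).

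I do not anticipate a genuine obstacle; the only points requiring care are keeping the quantifiers over $\epsilon$ (or $\delta$) and $t$ in place throughout, correctly matching the strict inequality in (ii) with the non-strict one in (i) by complementation, and invoking the boundedness $0\le\nu_x(t)\le 1$ so that the absolute value in the real $I$-limit collapses to $1-\nu_{f(s)-p}(t)$. One could alternatively run the argument cyclically as (i)$\Rightarrow$(ii)$\Rightarrow$(iii)$\Rightarrow$(i), but since each of the two equivalences is immediate once the underlying sets are written out explicitly, I would present it as the two separate equivalences above.
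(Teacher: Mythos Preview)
Your proposal is correct and follows essentially the same approach as the paper: complementation for (i)$\Leftrightarrow$(ii) and the boundedness $\nu_{f(s)-p}(t)\le 1$ to reduce $|\nu_{f(s)-p}(t)-1|\ge\epsilon$ to $\nu_{f(s)-p}(t)\le 1-\epsilon$ for (iii). The only cosmetic difference is that the paper runs the chain (i)$\Rightarrow$(ii)$\Rightarrow$(iii) and handles the latter step by splitting the absolute-value set into a union with an empty piece, whereas you collapse the absolute value directly; both amount to the same observation.
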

\begin{proof}
	Suppose that $f$ is $I_\nu$-convergent to $p$ then for each $\epsilon > 0$ and $t > 0$ the set $\{s\in S:\nu_{f(s)-p}(t)\leq 1-\epsilon \}\in I$ i.e. $\{s\in S:\nu_{f(s)-p}(t)> 1-\epsilon \}\in F(I)$. So (i) implies (ii). Again consider the set $\{s\in S:|\nu_{f(s)-p}(t)-1|\geq \epsilon \}= \{s\in S:\nu_{f(s)-p}(t)\geq 1+\epsilon \} \cup \{s\in S:\nu_{f(s)-p}(t)\leq 1-\epsilon \} $. Since for any $\epsilon > 0$ the set  $\{s\in S:\nu_{f(s)-p}(t)\geq 1+\epsilon \}=\phi \in I $ and $\{s\in S:\nu_{f(s)-p}(t)\leq 1-\epsilon \} \in I$ (as from (ii) we have  $\{s\in S:\nu_{f(s)-p}(t)> 1-\epsilon \}\in F(I)$ ). Therefore the set $\{s\in S:|\nu_{f(s)-p}(t)-1|\geq \epsilon \}\in I$. Hence (ii) implies (iii). Converse part follows in similar technique. Hence the result holds.  
\end{proof}.
If $S=\mathbb N$ we obtain the usual definition of $I_\nu$-convergence of sequences in PN Spaces. In this case the notation $I_\nu$-$\lim p_n=p$ is used for a real sequence $\{p_n\}$. \\
We consider some basic results regarding $I_\nu$-convergence in a PNS.
\begin{lemma}\label{6chl1}
	Let $I, K$ be ideals on an arbitrary set  $S$ and let $P$ be a PNS.\\
	(i) $I_\nu$-limit of a function is unique.\\
	(ii) If $I\subseteq K$ then for each function $f:S\rightarrow P$, we get  $I_\nu$-$\lim f=p$ implies $K_\nu$-$\lim f=p$.\\
\end{lemma}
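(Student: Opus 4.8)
The plan is to handle (ii) first, since it is essentially immediate, and then focus the real work on (i).

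For (ii): I would fix arbitrary $\epsilon>0$ and $t>0$ and write $E=\{s\in S:\nu_{f(s)-p}(t)\le 1-\epsilon\}$. Since $I_\nu$-$\lim f=p$, the set $E$ belongs to $I$; as $I\subseteq K$ and ideals are downward closed (in fact here $E$ is simply an element of $I$), we get $E\in K$. Because $\epsilon$ and $t$ were arbitrary, this is exactly the statement $K_\nu$-$\lim f=p$. No PN-space axioms beyond the definition of $I_\nu$-convergence are needed here — it is pure bookkeeping with the inclusion of ideals.

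For (i): I would argue by contradiction, assuming $I_\nu$-$\lim f=p$ and $I_\nu$-$\lim f=q$ with $p\ne q$. Since $p-q\ne 0$, axiom (ii) of a PN space gives some $t_0>0$ with $\nu_{p-q}(t_0)<1$. Using the regularity of the $t$-norm (see the next paragraph) I would pick $\epsilon>0$ small enough that $(1-\epsilon)*(1-\epsilon)>\nu_{p-q}(t_0)$. Now the sets $A=\{s\in S:\nu_{f(s)-p}(t_0/2)\le 1-\epsilon\}$ and $B=\{s\in S:\nu_{f(s)-q}(t_0/2)\le 1-\epsilon\}$ both lie in $I$, hence so does $A\cup B$; since $I$ is non-trivial, $S\notin I$, so I may choose $s_0\in S\setminus(A\cup B)$. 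For this $s_0$ both $\nu_{f(s_0)-p}(t_0/2)>1-\epsilon$ and $\nu_{f(s_0)-q}(t_0/2)>1-\epsilon$ hold. Decomposing $p-q=(p-f(s_0))+(f(s_0)-q)$ and $t_0=t_0/2+t_0/2$, axiom (iv) yields $\nu_{p-q}(t_0)\ge \nu_{p-f(s_0)}(t_0/2)*\nu_{f(s_0)-q}(t_0/2)$, while axiom (iii) with $\alpha=-1$ gives $\nu_{p-f(s_0)}=\nu_{f(s_0)-p}$. Combining these with the monotonicity of $*$ gives $\nu_{p-q}(t_0)\ge(1-\epsilon)*(1-\epsilon)>\nu_{p-q}(t_0)$, a contradiction; hence $p=q$.

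I expect the only delicate point to be the choice of $\epsilon$ in (i), which is where the triangle function enters — precisely the kind of place, flagged already in the introduction, where a property can behave differently in a PNS. The argument needs $\sup_{a<1}(a*a)=1$, i.e. $a*a\to 1$ as $a\to 1^{-}$, which is guaranteed by the (standard) continuity assumption on $*$ and is satisfied by all the $t$-norms in Example~2.1 ($\max\{a+b-1,0\}$, $ab$, and $\min\{a,b\}$). Everything else — the passage between $A\cup B\in I$ and the existence of $s_0$, and the two applications of the PN-space axioms — is routine.
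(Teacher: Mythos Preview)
Your proof is correct, and it supplies exactly the details the paper omits: the paper's own proof simply says that (i) is parallel to Theorem~2.2 of \cite{MuM} and that (ii) is trivial. Your argument for (i) is the standard PN-space uniqueness proof (split $t_0$, find a common index outside two $I$-sets, apply axiom~(iv)), and your treatment of (ii) is precisely the one-line bookkeeping the paper alludes to; the continuity condition on $*$ that you flag is the same one the paper later uses tacitly (see the line ``Choose $r>0$ such that $(1-r)*(1-r)\geq 1-\epsilon$'' in Theorem~4.1), so you are being more careful than the paper, not less.
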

\begin{proof}
	(i) The proof is parallel with the proof of Theorem 2.2 in \cite{MuM}. (ii) Proof is trivial.
\end{proof}
Since we are working with functions, we modify the definition of $I_\nu^*$-convergence in PN Spaces.
\begin{definition}
	Let $I$ be an ideal on an arbitrary set $S$ and let $f:S\rightarrow P$ be a function in PNS $P$. The function $f$ is called $I^*$-convergent to $p \in P$ with respect to probabilistic norm $\nu$ (in short $I_\nu ^{*}$-convergent to $p$)  if there exists a set $M\in F(I)$ such that the function $g:S\rightarrow P$ defined by
	\[g(s)=\left\{\begin {array}{ll}
	f(s) & \mbox{if $s\in M$} \\
	p & \mbox{if $s\notin M$}
\end{array}
\right. \] \\
is Fin$(S)$-convergent to $p$ with respect to norm $\nu$. 
\end{definition} 
If $f$ is $I^*$-convergent to $p$ with respect to norm $\nu$ (i.e., $I^*_\nu$-convergent to $p$), then we write $I^*_\nu$-$\lim f= p$. The usual notion of $I^*_\nu$-convergence of a sequence is a special case when $S=\mathbb N$, and we write $I^*_\nu$-$\lim p_n=p$ for a real sequence $\{p_n\}$.\\
In the definition of $I^K$-convergence with respect to probabilistic norm $\nu$ we simply replace the Fin by an ideal $K$ on the set $S$.
\begin{definition}
Let $K$ and $I$ be ideals on an arbitrary set $S$ and let $P$ be a PNS and $p$ be an element of $P$. The function $f:S\rightarrow P$  is called $I^K$-convergent to $p\in P$ with respect to probabilistic norm $\nu$ (in short $I_\nu ^{K}$-convergent to $p$) if there exists a set $M\in F(I)$ such that the function $g:S\rightarrow P$ given by
\[g(s)=\left\{\begin {array}{ll}
f(s) & \mbox{if $s\in M$} \\
p & \mbox{if $s\notin M$}
\end{array}
\right. \] \\
is $K_\nu$-convergent to $p$ (i.e. $K$-convergent to $p$ with respect to probabilistic norm $\nu$.)

\end{definition}
If $f$ is $I^K$-convergent to $p$ with respect to probabilistic norm $\nu$ (i.e., $I^K_\nu$-convergent to $p$), then we write $I^K_\nu$-$\lim f= p$.
As usual, notion of $I_\nu^K$-convergence of sequences is a special case when $S=\mathbb N$.
\begin{remark}
We can reformulate the definition of $I_\nu^K$-convergence in the following way: if there exists $M\in F(I)$ such that the function $f|_M$ is $K|_M$-convergent to $p$ with respect to probabilistic norm $\nu$ where $K|_M=\{A\cap M:A\in K\}$.
\end{remark}

\begin{remark}
$I_\nu^K$-convergence as a generalization of all types of $I_\nu^*$-convergence of sequences and functions from $S$ to $P$. Here we will work with functions. One of the reasons is that utilizing functions can be advantageous as it often leads to a simplification of notation.
\end{remark}
\begin{lemma}\label{6ch0}
If $I$ and $K$ are ideals on an arbitrary set $S$ and $f:S\rightarrow P$ is a function such that $K_\nu$-$\lim f= p$, then $I^K_\nu$-$\lim f= p$.
\end{lemma}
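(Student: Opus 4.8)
The plan is to produce, directly from the definition of $I^K_\nu$-convergence, a witnessing set $M \in F(I)$ for which the associated auxiliary function is $K_\nu$-convergent to $p$; the natural — and, as it turns out, the only needed — choice is $M = S$.

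First I would record that $S \in F(I)$. Indeed, the ideal axioms force $\emptyset \in I$, hence $S \setminus S = \emptyset \in I$, which by the description $F(I) = \{E \subseteq S : S \setminus E \in I\}$ of the filter associated with $I$ means precisely that $S \in F(I)$.

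Next, for this choice of $M$ I would observe that the function $g : S \rightarrow P$ occurring in the definition of $I^K_\nu$-convergence, namely
\[
g(s) = \begin{cases} f(s), & s \in M, \\ p, & s \notin M, \end{cases}
\]
coincides with $f$, because the clause $s \notin M = S$ is never triggered. Therefore the hypothesis $K_\nu\text{-}\lim f = p$ says exactly that this $g$ is $K_\nu$-convergent to $p$, so the requirement in the definition of $I^K_\nu$-convergence is met with the set $M = S$; hence $I^K_\nu\text{-}\lim f = p$. Equivalently, invoking the reformulation from the preceding remark: take $M = S \in F(I)$, note that $f|_S = f$ and $K|_S = \{A \cap S : A \in K\} = K$, so $f|_M$ is $K|_M$-convergent to $p$ by hypothesis.

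I do not anticipate any real obstacle here: once $f$ itself is $K_\nu$-convergent to $p$, the trivial good set $M = S$ already works, so the conclusion is immediate. The single point requiring (minimal) care is the verification that $S$ belongs to the filter $F(I)$, and this holds for every ideal $I$ on $S$ since $\emptyset \in I$.
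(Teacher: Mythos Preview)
Your proposal is correct and follows essentially the same approach as the paper: take $M=S\in F(I)$, observe that then $g=f$ (equivalently $K|_M=K$ and $f|_M=f$), and conclude. The only difference is that you spell out why $S\in F(I)$ while the paper simply declares it obvious.
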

\begin{proof}
If we consider the set $M=S$ then it is obvious that $M\in F(I)$. Therefore for this case ideals $K|_M$ and $K$ both are equal. Hence $f$ is also $K|_M$-convergent to $p$ with respect to probabilistic norm $\nu$. This shows that $f$ is $I^K_{\nu}$-convergent to $p$.
\end{proof}
\begin{proposition}\label{6chpro1}
Let $I,J,K$ and $L$ be ideals on a set $S$ such that $I\subseteq J$ and $K\subseteq L$ and let $P$ be a PNS. Then for any function $f:S\rightarrow P$, we have\\
(i) $I^K_\nu$-$\lim f= p ~~~ \Rightarrow ~~~ J^K_\nu$-$\lim f= p$ 
(ii) $I^K_\nu$-$\lim f= p ~~~ \Rightarrow ~~~ I^L_\nu$-$\lim f= p$
\end{proposition}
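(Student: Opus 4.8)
The plan is to verify both implications directly from the definition of $I_\nu^K$-convergence, using only the monotonicity of the passage from an ideal to its associated filter together with Lemma \ref{6chl1}(ii). In both cases the strategy is the same: take the witnessing set $M$ coming from the hypothesis and show that it (and the associated auxiliary function $g$) still witnesses the stronger-looking conclusion.

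For part (i), I would start by assuming $I_\nu^K$-$\lim f = p$, so that there exists a set $M\in F(I)$ for which the auxiliary function $g$ (equal to $f$ on $M$ and to $p$ off $M$) is $K_\nu$-convergent to $p$. The key elementary observation is that $I\subseteq J$ forces $F(I)\subseteq F(J)$: if $E\in F(I)$ then $S\setminus E\in I\subseteq J$, hence $S\setminus E\in J$, i.e. $E\in F(J)$. Consequently the very same $M$ lies in $F(J)$, and the very same $g$ is $K_\nu$-convergent to $p$; this is precisely the requirement for $J_\nu^K$-$\lim f = p$.

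For part (ii), I would again fix $M\in F(I)$ with its associated $g$ being $K_\nu$-convergent to $p$. Since $K\subseteq L$, Lemma \ref{6chl1}(ii) applied to the function $g$ (in place of $f$) yields that $g$ is $L_\nu$-convergent to $p$. Thus the same $M\in F(I)$ now witnesses $I_\nu^L$-$\lim f = p$, and the proof is complete.

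I do not expect any genuine obstacle here, as the statement is purely a monotonicity fact. The only points deserving care are: (a) recording the implication $I\subseteq J\Rightarrow F(I)\subseteq F(J)$ used in part (i); and (b) making sure that in part (ii) Lemma \ref{6chl1}(ii) is invoked for the auxiliary function $g$ rather than for $f$ itself. Once the argument is phrased through the definition's auxiliary function, both steps become immediate, and one could alternatively package (ii) via the $K|_M$-reformulation of the Remark, noting that $K\subseteq L$ gives $K|_M\subseteq L|_M$.
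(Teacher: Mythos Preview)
Your proposal is correct and follows essentially the same approach as the paper: part (i) uses $I\subseteq J\Rightarrow F(I)\subseteq F(J)$ to keep the same witnessing $M$, and part (ii) invokes Lemma~\ref{6chl1}(ii) together with $K\subseteq L$. Your version is slightly more explicit (spelling out the filter inclusion and noting that the lemma is applied to the auxiliary function $g$), but the argument is the same.
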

\begin{proof}
(i) As $I^K_\nu$-$\lim f= p$ so there exist a set $M\in F(I)$ such that the function $g:S\rightarrow P$ given by
\[g(s)=\left\{\begin {array}{ll}
f(s) & \mbox{if $s\in M$} \\
p & \mbox{if $s\notin M$}
\end{array}
\right. \] \\
is $K_\nu$-convergent to $p$. Here $M\in F(I)\subseteq F(J)$ as $I\subseteq J$. So obviously $J^K_\nu$-$\lim f= p$.\\
(ii) The proof directly follows from the fact $K \subseteq L$ and Lemma \ref{6chl1}(ii).

\end{proof}

\begin{theorem}\label{6chth2}
Let $I$, $K$ be ideals on an arbitrary set $S$,let $P$ be a PNS and let $f$ be a function from $S$ to $P$ then
(i) $I^K_\nu$-$\lim f= p ~~~ \Rightarrow ~~~ I_\nu$-$\lim f= p$ if $K\subseteq I$. (ii)  $ I_\nu$-$\lim f= p ~~~ \Rightarrow ~~~ I^K_\nu$-$\lim f= p$ if $I\subseteq K$
\end{theorem}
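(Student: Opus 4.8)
The plan is to prove the two implications separately; part (i) carries essentially all the work, while part (ii) is immediate from the lemmas already proved.

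For part (i), I would begin from the definition of $I^K_\nu$-convergence: since $I^K_\nu$-$\lim f=p$, there is a set $M\in F(I)$ such that the auxiliary function $g$, equal to $f$ on $M$ and to $p$ off $M$, is $K_\nu$-convergent to $p$. Fix $\epsilon>0$ and $t>0$ and put $A=\{s\in S:\nu_{f(s)-p}(t)\leq 1-\epsilon\}$. The key observation is that for $s\notin M$ we have $g(s)=p$, so $\nu_{g(s)-p}(t)=\nu_0(t)=1>1-\epsilon$ by condition (ii) of the PNS axioms; hence no such $s$ belongs to the $K$-small set $B=\{s\in S:\nu_{g(s)-p}(t)\leq 1-\epsilon\}$. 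Consequently $A\cap M=B\cap M\subseteq B\in K$, and by heredity of the ideal $K$ together with the hypothesis $K\subseteq I$ we get $A\cap M\in I$. Since $M\in F(I)$ means $S\setminus M\in I$, and $A\subseteq (A\cap M)\cup(S\setminus M)$, the finite-union and heredity properties of $I$ give $A\in I$. As $\epsilon$ and $t$ were arbitrary, $I_\nu$-$\lim f=p$.

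For part (ii), the argument is short: choosing $M=S\in F(I)$ makes the auxiliary function $g$ coincide with $f$, so it is enough to verify that $f$ is $K_\nu$-convergent to $p$; but $I_\nu$-$\lim f=p$ together with $I\subseteq K$ yields $K_\nu$-$\lim f=p$ by Lemma \ref{6chl1}(ii), hence $I^K_\nu$-$\lim f=p$. Equivalently, one may simply combine Lemma \ref{6chl1}(ii) with Lemma \ref{6ch0}.

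I expect the only delicate point to be the bookkeeping in part (i): the set $A$ itself need not lie in $K$, so one must first intersect with $M$ to land inside the $K$-small set $B$ associated with $g$, and only afterwards invoke $K\subseteq I$ and re-absorb the $I$-small complement $S\setminus M$. Beyond that, the proof is a routine application of the ideal axioms and the basic PNS fact $\nu_0(t)=1$ for $t>0$.
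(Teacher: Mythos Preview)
Your proof is correct and follows essentially the same route as the paper's own argument: for (i) you identify the set $B=M_{\nu,g}$, observe $A\cap M=B\in K\subseteq I$, and then use $A\subseteq(A\cap M)\cup(S\setminus M)$ with both pieces in $I$; for (ii) you invoke Lemma~\ref{6chl1}(ii) together with Lemma~\ref{6ch0}, exactly as the paper does. The only difference is cosmetic---you spell out the justification $\nu_0(t)=1$ for why $B\subseteq M$, which the paper leaves implicit.
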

\begin{proof}
(i) Now  $I^K_\nu$-$\lim f= p$, then by the definition of $I_\nu^K$-convergence there exist a set $M\in F(I)$ such that the function $g:S\rightarrow P$ given by
\[g(s)=\left\{\begin {array}{ll}
f(s) & \mbox{if $s\in M$} \\
p & \mbox{if $s\notin M$}
\end{array}
\right. \] \\
is $K_\nu$-convergent to $p$. i.e. for every $\epsilon(>0)$ and $t(>0)$
$$M_{\nu,g}=\{s\in S: \nu_{g(s)-p}(t)\leq 1-\epsilon\}\in K\subseteq I$$
Now for any $t(>0)$ and $\epsilon(>0)$, we consider the set
$$M_{\nu,f}=\{s\in S: \nu_{f(s)-p}(t)\leq 1-\epsilon\}$$
Again, $M_{\nu,g}= M_{\nu,f}\cap M\in K\subseteq I$. Now $M_{\nu,f}\subseteq (S\setminus M)\cup M_{\nu,g}$. Since $(S\setminus M)\in I$ and $M_{\nu,g}\in I$. So $M_{\nu,f}=\{s\in S: \nu_{f(s)-p}(t)\leq 1-\epsilon\}\in I$. Thus $I_\nu$-$\lim f= p$.\\
(ii) The proof follows from the Lemma \ref{6chl1} (ii) and Lemma \ref{6ch0} 
\end{proof}
\begin{note}
If  $K\not\subset I$ and $I\not\subset K$ then the implications (i) in theorem \ref{6chth2} may not be true. To establish this we cite an example which is $I^K_\nu$-convergence but not $I_\nu$-convergence.
\end{note} 
\begin{example}
Consider a set $B\in K\setminus I$ and two distinct point $x,y \in P$. Let us define the function $f:S\rightarrow P$ by
\[f(s)=\left\{\begin {array}{ll}
x & \mbox{if $s\in S\setminus B$} \\
y & \mbox{if $s\in B$}
\end{array}
\right. \] \\
Now we consider the set $M_{\nu,f}=\{s\in S: \nu_{f(s)-x}(t)> 1-\epsilon\}$. So $S\setminus B \subseteq M_{\nu,f}$ i.e.  $M_{\nu,f} \in F(K)$ as $S\setminus B \in F(K)$. Therefore $f$ is $K_\nu$-convergent to $x$. Hence by the Lemma \ref{6ch0} $f$ is $I_\nu ^K$-convergent to $x$ i.e. $I^K_\nu$-$\lim f=x$. Now if $\nu_{y-x}(t)> 1-\epsilon $ then for arbitrary $\epsilon > 0$ we get $\nu_{(y-x)}(t)=1$, for all $t> 0$. So we get $y=x$ which contradicts that $x$ and $y$ are distinct elements. Therefore $\nu_{y-x}(t) \leq 1-\epsilon $. So $M_{\nu,f}=(S\setminus B)$. Therefore $M_{\nu,f}^c=\{s\in S: \nu_{f(s)-x}(t)\leq 1-\epsilon\}=B \notin I$. Hence $f$ is not $I_\nu$-convergent to $x$. So $I_\nu \lim f\ne x$.

\end{example}
\begin{note}
Consider the two sets $A=\{3n:n\in \mathbb{N}\}$ and $B=\{4n:n\in \mathbb{N}\}$ then $2^{A}$ and $2^{B}$ are two ideals such that $2^{A}\not\subset2^{B}$ and  $2^{B}\not\subset2^{A}$ but  $2^{A}\cap 2^{B}\neq \phi$.
\end{note}

\subsection{$I_\nu ^I$, $K_\nu ^I$ and $(I\vee K)_\nu^K$-Convergence}
In this part we discuss the case when $I=K$ and for any two ideals  $I, K$ on a non void set $S$ we have the ideal $$I\vee K=\{A\cup B:A\in I,B\in K\}$$
which is the smallest ideal containing both $I$ and $K$ on $S$ i.e. $I,K\subseteq I\vee K$. It is clear that if $I\vee K$ is non-trivial then $I$ and $K$ both are non-trivial. But converse part may not be true. To support this following examples are given.
\begin{example}
Consider the two sets $N_1=\{4n:n\in \mathbb{N}\}$ and $N_2=\{4n-1:n\in \mathbb{N}\}$ now it is clear that $2^{N_1}$, $2^{N_2}$ and $2^{N_1}\vee 2^{N_2}$ all are non-trivial ideal on $\mathbb{N}$
\end{example}

\begin{example}
Now let  $N_1$ be set of all odd integers and $N_2$ be set of all even integers. Then it is clear that $I=2^{N_1}$ and $K=2^{N_2}$ are both non-trivial ideals on $\mathbb{N}$ but $I\vee K=2^\mathbb{N}$, the trivial ideal on $\mathbb{N}$
\end{example}
If $I\vee K$ is a non-trivial on a non empty set $S$ then the dual filter is the filter given by $$F(I\vee K)=\{G\cap H:G\in F(I),H\in F(K)\}$$

\begin{theorem}
Let $f:S\rightarrow P$ be a map, $I,K$ be ideal on the arbitrary set $S$ and $P$ be a PN space. Then\\
(i) $I_\nu$-$\lim f= p$ if and only if $I^I_\nu$-$\lim f= p$ and\\
(ii) $I^K_\nu$-$\lim f= p$ if and only if $(I\vee K)^K_\nu$-$\lim f= p$.
\end{theorem}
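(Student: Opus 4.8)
The plan is to treat (i) and the easy half of (ii) as immediate corollaries of the monotonicity results already established, and to reserve the real work for the converse in (ii).

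For (i): the forward implication ``$I_\nu$-$\lim f=p \Rightarrow I^I_\nu$-$\lim f=p$'' is Lemma \ref{6ch0} applied with $K=I$ (equivalently, Theorem \ref{6chth2}(ii), since $I\subseteq I$), and the reverse implication is Theorem \ref{6chth2}(i) with $K=I\subseteq I$. For the forward half of (ii): since $I\subseteq I\vee K$, Proposition \ref{6chpro1}(i) gives ``$I^K_\nu$-$\lim f=p \Rightarrow (I\vee K)^K_\nu$-$\lim f=p$'' at once.

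So the substance is the converse ``$(I\vee K)^K_\nu$-$\lim f=p \Rightarrow I^K_\nu$-$\lim f=p$''. I would begin with a witnessing set $M\in F(I\vee K)$ for which the cut-off function $g$ (equal to $f$ on $M$ and to $p$ off $M$) is $K_\nu$-convergent to $p$, and then invoke the structural description $F(I\vee K)=\{G\cap H: G\in F(I),\, H\in F(K)\}$ to write $M=G\cap H$ with $G\in F(I)$ and $H\in F(K)$. The claim to prove is that $G$ itself witnesses $I^K_\nu$-convergence, i.e.\ the function $h$ equal to $f$ on $G$ and to $p$ off $G$ is $K_\nu$-convergent to $p$. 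For fixed $\epsilon>0$ and $t>0$ I would compare $A=\{s\in S:\nu_{h(s)-p}(t)\le 1-\epsilon\}$ with $B=\{s\in S:\nu_{g(s)-p}(t)\le 1-\epsilon\}$; since $h(s)=p=g(s)$ whenever $s\notin G$ and $\nu_0(t)=1>1-\epsilon$, one gets $A\subseteq G$. Decomposing $A=(A\cap M)\cup(A\cap(G\setminus M))$, the first piece lies in $B\in K$ (because $h=f=g$ on $M$), and the second lies in $G\setminus M\subseteq S\setminus H\in K$; hence $A\in K$ as a subset of a union of two members of $K$. Since $\epsilon$ and $t$ were arbitrary and $G\in F(I)$, this is exactly $I^K_\nu$-$\lim f=p$.

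The only point needing care — the main obstacle, such as it is — is the bookkeeping in that last decomposition: $G$ need not belong to $F(I\vee K)$, so $h$ and $g$ genuinely differ on $G\setminus M$, and one must verify that this discrepancy is harmless, which it is precisely because $G\setminus M\subseteq S\setminus H$ and $S\setminus H\in K$. Everything else is routine, and no property of the $t$-norm beyond $\nu_0\equiv 1$ on $(0,\infty)$ is used.
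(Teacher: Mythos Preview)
Your proposal is correct. For part (i) and the forward half of (ii) you take a shorter route than the paper by invoking Lemma~\ref{6ch0}, Theorem~\ref{6chth2}, and Proposition~\ref{6chpro1} directly, whereas the paper reproves these special cases from scratch; your way is cleaner and loses nothing. For the substantive converse in (ii), your argument is essentially the paper's: both write $M=M_1\cap M_2$ (your $G\cap H$) with $M_1\in F(I)$, $M_2\in F(K)$ and show that $M_1$ witnesses $I^K_\nu$-convergence, absorbing the discrepancy $M_1\setminus M$ into $S\setminus M_2\in K$. The only cosmetic difference is that the paper works with the $K|_{M_1}$-reformulation (showing the ``good'' set $\{s:\nu_{f(s)-p}(t)>1-\epsilon\}\cap M_1$ lies in $F(K|_{M_1})$), while you work with the cutoff-function definition (showing the ``bad'' set $A$ lies in $K$); these are dual and the underlying decomposition is identical.
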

\begin{proof}
(i) Suppose that $f$ is $I_\nu$-convergent to $p$. Then for $M=S$ we have $I|_M=I$ which implies that $f$ is also $I^I_\nu$-convergent to $p$.\\
Conversely, let $f$ is $I^I_\nu$-convergent to $p$ so there is a set $M\in F(I)$ such that $f|_M$ is $I_\nu|_M$-convergent to $p$. Now for any $t(>0)$ and $\epsilon(>0)$, we consider the set $M_{\nu,f}=\{s\in S: \nu_{f(s)-p}(t)\leq 1-\epsilon\}$. Then there exists $G\in F(I)$ such that
$$M_{\nu,f}\cap M=G\cap M$$
Clearly $G\cap M\in F(I)$ and $G\cap M\subseteq M_{\nu,f}$. So $M_{\nu,f}\in F(I)$\\
(ii) Suppose that $f$ is $I_\nu^K$-convergent to $p$. Then there is a set $M\in F(I)$ such that $f|_M$ is $K_\nu|_M$-convergent. Clearly if $M\in F(I)$ then $M\in F(I\vee K)$. Therefore $f$ is also $(I\vee K)_\nu^K$-convergent to $p$.\\
Conversely, let $f$ is $(I\vee K)_\nu^K$-convergent to $p$ i.e. there is a set $M\in F(I\vee K)$ such that $f|_M$ is $K_\nu|_M$-convergent. Now for any $t(>0)$ and $\epsilon(>0)$, we consider the set $M_{\nu,f}=\{s\in S: \nu_{f(s)-p}(t)\leq 1-\epsilon\}$. Then there exists $G\in F(K)$ such that
$$M_{\nu,f}\cap M=G\cap M$$
Since $M\in F(I\vee K)$, then $M=M_1\cap M_2$ for some $M_1\in F(I)$ and $M_2\in F(K)$. Now we have $$M_{\nu,f}\cap M_1\supseteq M_{\nu,f}\cap M=(G\cap M_2)\cap M_1$$
Since $G\cap M_2\in F(K)$, this shows that $M_{\nu,f}\cap M_1\in F(K|_{M_1})$. So $f$ is $I_\nu^K$-convergent to $p$.
\end{proof}
\begin{remark}
Consider the two  admissible ideals $I , K$ on a non void set $S$ and $(P,\nu,*)$ be a PN Space and $f:S\rightarrow P$ be a function. Then $K_\nu^I$-$\lim f =l $ implies  $I_\nu^K$-$\lim f= l$ if $I\subseteq K$.
\end{remark}
\begin{theorem}
Let $I , K, I_1, I_2, K_1, K_2$ be  admissible ideals on a non empty set $S$ and  $f:S\rightarrow P$ be a function on a PN Space  $(P,\nu,*)$ such that \\
(i) $I_\nu^{K_1} \lim f = l$ then $I_\nu^{K_1 \vee K_2} \lim f = l$  and if moreover $I_\nu^{K_2}$-limit exist then $I_\nu^{K_2} \lim f = l$.
(ii)  $I_{1_\nu}^{K} \lim f = l$ then $(I_1 \vee I_2)_\nu^{K} \lim f = l$  and if moreover $I_{2_\nu}^{K}$-limit exist then $I_{2_\nu}^{K} \lim f = l$.
\end{theorem}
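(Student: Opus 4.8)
The plan is to derive the whole statement from Proposition~\ref{6chpro1} together with the uniqueness of the $I^K_\nu$-limit, so that no estimate involving the triangle function $*$ has to be carried out; the only genuinely new ingredient is a short remark securing that uniqueness is available for the joined ideals that appear.

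First I would dispose of part (i). Since $K_1\subseteq K_1\vee K_2$, Proposition~\ref{6chpro1}(ii), applied with the first ideal fixed equal to $I$ and with $K=K_1$, $L=K_1\vee K_2$, gives immediately that $I^{K_1}_\nu$-$\lim f=l$ implies $I^{K_1\vee K_2}_\nu$-$\lim f=l$, which is the first assertion. For the second assertion, suppose in addition that $I^{K_2}_\nu$-$\lim f$ exists, say it equals $l'$; because $K_2\subseteq K_1\vee K_2$, Proposition~\ref{6chpro1}(ii) again yields $I^{K_1\vee K_2}_\nu$-$\lim f=l'$. Since we already have $I^{K_1\vee K_2}_\nu$-$\lim f=l$, uniqueness of the $I^{K_1\vee K_2}_\nu$-limit forces $l=l'$, that is, $I^{K_2}_\nu$-$\lim f=l$.

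Part (ii) is handled by the mirror-image argument. From $I_1\subseteq I_1\vee I_2$ and Proposition~\ref{6chpro1}(i) (with $K$ fixed, $I=I_1$, $J=I_1\vee I_2$) we obtain $I^{K}_{1_\nu}$-$\lim f=l \Rightarrow (I_1\vee I_2)^{K}_\nu$-$\lim f=l$; and if moreover $I^{K}_{2_\nu}$-$\lim f=l'$ exists, then $I_2\subseteq I_1\vee I_2$ together with Proposition~\ref{6chpro1}(i) gives $(I_1\vee I_2)^{K}_\nu$-$\lim f=l'$, so uniqueness of the $(I_1\vee I_2)^{K}_\nu$-limit yields $l=l'$.

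The point that needs care --- and the one I expect to be the main obstacle --- is the uniqueness invoked above, because the $I^K_\nu$-limit is unique only when the relevant joined ideal is non-trivial. I would deal with this by first recording the auxiliary fact that $I^K_\nu$-$\lim f=l$ implies $(I\vee K)_\nu$-$\lim f=l$: if $M\in F(I)$ is a witness of the $I^K_\nu$-convergence, with $g$ agreeing with $f$ on $M$ and equal to $l$ on $S\setminus M$, then for all $\epsilon,t>0$ the set $\{s\in S:\nu_{f(s)-l}(t)\leq 1-\epsilon\}$ is contained in $(S\setminus M)\cup(M\cap\{s\in S:\nu_{g(s)-l}(t)\leq 1-\epsilon\})$, the first part belonging to $I$ and the second to $K$, so the whole set lies in $I\vee K$. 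Applying Lemma~\ref{6chl1}(i) to the (non-trivial) ideal $I\vee K$ then gives uniqueness of the $I^K_\nu$-limit. Hence, under the natural hypothesis that $I\vee K_1\vee K_2$ and $I_1\vee I_2\vee K$ are non-trivial, the proof is complete; it is the bookkeeping of this non-triviality requirement, not any analytic difficulty, that is the only delicate ingredient.
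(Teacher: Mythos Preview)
Your argument is correct and follows essentially the same route as the paper: both parts are obtained from Proposition~\ref{6chpro1} (monotonicity of $I^K_\nu$-convergence in each ideal) together with uniqueness of the $I^{K_1\vee K_2}_\nu$-limit, respectively the $(I_1\vee I_2)^K_\nu$-limit. The only difference is that you are explicitly careful about the non-triviality hypothesis required for uniqueness (and supply the auxiliary implication $I^K_\nu\text{-}\lim f=l\Rightarrow (I\vee K)_\nu\text{-}\lim f=l$ to secure it), whereas the paper simply invokes ``uniqueness of the $I^K_\nu$-limit'' without comment; your extra paragraph is a welcome precision, not a deviation in method.
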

\begin{proof}
(i)	 As $K_1 \subseteq K_1 \vee K_2 $ so using the Proposition \ref{6chpro1}(ii) we have $I_\nu^{K_1} \lim f = l$ implies $I_\nu^{K_1 \vee K_2} \lim f = l$. Hence the first part of the proof is done.\\
For 2nd part of this proof, let us assume the contrary. Let $I_\nu^{K_2}$-limit exist and $I_\nu^{K_2} \lim f = m$, where $m \ne l$. As $K_2 \subseteq K_1 \vee K_2 $ so using the Proposition \ref{6chpro1}(ii) we have $I_\nu^{K_1 \vee K_2} \lim f = m$ which contradicts the fact of uniqueness of $I_\nu^{K}$-limit. So our assumption is wrong. Therefore  $I_\nu^{K_2} \lim f = l$.\\
(ii)	 As $I_1 \subseteq I_1 \vee I_2 $ so using the Proposition \ref{6chpro1}(i) we have $I_{1_\nu}^{K} \lim f = l$ implies $(I_1 \vee I_2)_\nu^{K} \lim f = l$. Hence the first part of the proof is done.\\
For 2nd part of this proof, let us assume the contrary. Let $I_{2_\nu}^{K}$-limit exist and $I_{2_\nu}^{K} \lim f = m$, where $m \ne l$. As $I_2 \subseteq I_1 \vee I_2 $ so using the Proposition \ref{6chpro1}(i) we have $(I_1 \vee I_2)_\nu^{K} \lim f = m$ which contradicts the fact of uniqueness of $I_\nu^{K}$-limit. So our assumption is wrong. Therefore  $I_{2_\nu}^{K} \lim f = l$.	
\end{proof}
\begin{theorem}
Let $I , K$ be two admissible ideals on a non empty set $S$ and  $f:S\rightarrow P$ be a function on PN Space  $(P,\nu,*)$ such that $I_\nu^{K} \lim f = l$  then \\
(i)  $(I\vee K)_\nu^{I \vee K} \lim f = l$  and if moreover $K_\nu^{I}$-limit exist then $K_\nu^{I} \lim f = l$.
(ii)  $(I\vee K)_\nu \lim f = l$  and if moreover $K_\nu^{I}$-limit exist then $K_\nu^{I} \lim f = l$.
\end{theorem}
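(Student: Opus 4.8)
The plan is to derive everything from the monotonicity and identification results already proved: Proposition~\ref{6chpro1}, Theorem~\ref{6chth2}, the theorem asserting that $I_\nu$-$\lim f=p\iff I^I_\nu$-$\lim f=p$ and $I^K_\nu$-$\lim f=p\iff (I\vee K)^K_\nu$-$\lim f=p$, and uniqueness of the ordinary $J_\nu$-limit from Lemma~\ref{6chl1}(i). Throughout put $J=I\vee K$, so that $I\subseteq J$, $K\subseteq J$, and $K\vee I=I\vee K=J$.

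For the first assertion of (i), start from $I^K_\nu$-$\lim f=l$: since $I\subseteq J$, Proposition~\ref{6chpro1}(i) gives $J^K_\nu$-$\lim f=l$; since $K\subseteq J$, Proposition~\ref{6chpro1}(ii) then gives $J^J_\nu$-$\lim f=l$, which is exactly $(I\vee K)^{I\vee K}_\nu$-$\lim f=l$. The first assertion of (ii) follows at once: applying the equivalence $(I\vee K)_\nu$-$\lim f=p\iff (I\vee K)^{I\vee K}_\nu$-$\lim f=p$ (the first part of that theorem, taken for the ideal $I\vee K$) yields $(I\vee K)_\nu$-$\lim f=l$. Equivalently, the second part of that theorem turns $I^K_\nu$-$\lim f=l$ into $(I\vee K)^K_\nu$-$\lim f=l$, and then Theorem~\ref{6chth2}(i), applicable since $K\subseteq J$, collapses this to $(I\vee K)_\nu$-$\lim f=l$.

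The ``if moreover'' clause is common to (i) and (ii), so it suffices to prove it once. Suppose $K^I_\nu$-$\lim f$ exists, say $K^I_\nu$-$\lim f=m$; I must show $m=l$. Using the equivalence $K^I_\nu$-$\lim f=p\iff (K\vee I)^I_\nu$-$\lim f=p$ (the second part of that theorem, with the roles of $I$ and $K$ interchanged), we get $(I\vee K)^I_\nu$-$\lim f=m$; and since $I\subseteq J$, Theorem~\ref{6chth2}(i) gives $(I\vee K)_\nu$-$\lim f=m$. As shown above, $(I\vee K)_\nu$-$\lim f=l$, so uniqueness of the $(I\vee K)_\nu$-limit (Lemma~\ref{6chl1}(i)) forces $m=l$, whence $K^I_\nu$-$\lim f=l$.

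I do not expect a genuine obstacle here; the argument is a short chain of invocations of results already established. The one point needing care is the uniqueness step, which tacitly requires $J=I\vee K$ to be non-trivial so that $(I\vee K)_\nu$-limits are unique --- exactly the standing hypothesis already used when ``uniqueness of the $I^K_\nu$-limit'' was invoked in the preceding theorem, and harmless in any case since if $I\vee K=2^S$ the associated filter is degenerate and every clause of the statement is vacuous. The remaining checks (that $K\vee I=I\vee K$, and that each ideal on which a limit is asserted is non-trivial) are routine.
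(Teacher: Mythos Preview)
Your proof is correct and follows essentially the same approach as the paper: both derive everything by chaining the monotonicity results (Proposition~\ref{6chpro1} and Theorem~\ref{6chth2}) with an appeal to uniqueness. The only cosmetic differences are that the paper applies Proposition~\ref{6chpro1}(ii) before (i) in part~(i) (you reverse the order), and the paper treats the two ``moreover'' clauses separately (contradicting uniqueness at the level of $(I\vee K)^{I\vee K}_\nu$ in (i) and of $(I\vee K)_\nu$ in (ii)), whereas you correctly observe they are the same clause and prove it once --- a mild streamlining.
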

\begin{proof}
As $K \subseteq I \vee K $ so using the Proposition \ref{6chpro1}(ii) we have  $I_\nu^{K} \lim f = l$  implies $I_\nu^{I \vee K} \lim f = l$. Again as $I \subseteq I \vee K $ so using the Proposition \ref{6chpro1}(i) we have $I_\nu^{I \vee K} \lim f = l$ implies $(I\vee K)_\nu^{I \vee K} \lim f = l$.  Hence the first part of the proof is done. \\
For 2nd part of this proof, let us assume the contrary. Let  $K_\nu^{I}$-limit exist and $K_\nu^{I} \lim f = m$, where $l\ne m$. Now as $I \subseteq I \vee K $ so using the Proposition \ref{6chpro1}(ii) we have $K_\nu^{I \vee K} \lim f = m$. Again as $K \subseteq I \vee K $ so using the Proposition \ref{6chpro1}(i) we have $K_\nu^{I \vee K} \lim f = m$ implies $(I\vee K)_\nu^{I \vee K} \lim f = m$ which contradicts the fact of uniqueness of $I_\nu^{K}$-limit. So our assumption is wrong. Therefore $K_\nu^{I} \lim f = l$.

(ii)  As $I \subseteq I \vee K $ so using the Proposition \ref{6chpro1}(ii) we have $I_\nu^{K} \lim f = l$ implies $(I\vee K)^K_\nu \lim f = l$. Again as $K \subseteq I \vee K $ so using the Theorem \ref{6chth2}(i) we have $(I\vee K)^K_\nu \lim f = l$ implies $(I\vee K)_\nu \lim f = l$. Hence the first part of the proof is done. \\
For 2nd part of this proof, let us assume the contrary. Let  $K_\nu^{I}$-limit exist and $K_\nu^{I} \lim f = m$, where $l\ne m$. Now As $K \subseteq I \vee K $ so using the Proposition \ref{6chpro1}(i) we have  $K_\nu^{I} \lim f = m$ implies $(I\vee K)^I_\nu \lim f = m$. Again as $I \subseteq I \vee K $ so using the Theorem \ref{6chth2}(i) we have $(I\vee K)^I_\nu \lim f = m$ implies $(I\vee K)_\nu \lim f = m$, which contradicts the fact of uniqueness of $I_\nu^{K}$-limit. So our assumption is wrong. Therefore $K_\nu^{I} \lim f = l$.
\end{proof}
\section{Basic properties of $I_\nu^K$-convergence in PN spaces}
\begin{theorem}
Let $I\vee K$ be a nontrivial ideal on a non empty set $S$ and let $P$ be a PN Space. Then $I_\nu^K$-convergence of a function $f:S\rightarrow P$ has a unique $I_\nu^K$-limit.
\end{theorem}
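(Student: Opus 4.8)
The plan is to argue by contradiction and to reduce the claim to the already-established uniqueness of the ordinary $I_\nu$-limit, Lemma~\ref{6chl1}(i). Suppose $f$ has two $I_\nu^K$-limits $p$ and $q$. By the reformulation of $I_\nu^K$-convergence in terms of restrictions, there are sets $M_1,M_2\in F(I)$ with $f|_{M_1}$ being $K|_{M_1}$-convergent to $p$ and $f|_{M_2}$ being $K|_{M_2}$-convergent to $q$ with respect to $\nu$. Put $M=M_1\cap M_2$; since $F(I)$ is a filter, $M\in F(I)$. First I would verify that $f|_M$ is simultaneously $K|_M$-convergent to $p$ and to $q$: for each $\epsilon>0$ and $t>0$ the set $\{s\in M:\nu_{f(s)-p}(t)\le 1-\epsilon\}$ is contained in $\{s\in M_1:\nu_{f(s)-p}(t)\le 1-\epsilon\}\in K|_{M_1}$, and the intersection of a member of $K|_{M_1}$ with $M$ is a member of $K|_M$; the same argument works with $q$ and $M_2$ in place of $p$ and $M_1$. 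Hence $p$ and $q$ are both $K|_M$-limits of the function $f|_M\colon M\to P$.

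The heart of the argument is that $K|_M$ is a \emph{non-trivial} ideal on $M$, and this is exactly where the hypothesis that $I\vee K$ is non-trivial gets used. If $M$ were a member of $K|_M$, then $M=A\cap M\subseteq A$ for some $A\in K$, so $M\in K$; together with $S\setminus M\in I$ (which holds because $M\in F(I)$) this would give $S=M\cup(S\setminus M)\in I\vee K$, contradicting the non-triviality of $I\vee K$. Also $M\ne\emptyset$, since otherwise $S=S\setminus M\in I$, contradicting that $I$ is non-trivial (which itself follows from $I\vee K$ being non-trivial). Thus $K|_M$ is a non-trivial ideal on the non-empty set $M$, and applying Lemma~\ref{6chl1}(i) to the PNS $P$, the ideal $K|_M$ and the function $f|_M$ forces $p=q$, as required.

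A shorter variant, which I would mention as a remark, bypasses the restriction bookkeeping: by the theorem identifying $I_\nu^K$-convergence with $(I\vee K)_\nu^K$-convergence, each $I_\nu^K$-limit of $f$ is an $(I\vee K)_\nu^K$-limit, and since $K\subseteq I\vee K$, Theorem~\ref{6chth2}(i) upgrades it to an $(I\vee K)_\nu$-limit; uniqueness of the ordinary $(I\vee K)_\nu$-limit (Lemma~\ref{6chl1}(i), applicable because $I\vee K$ is non-trivial) then gives $p=q$. I do not anticipate a genuine obstacle here; the only point demanding care is confirming that the hypothesis on $I\vee K$ is precisely what makes $K|_M$ --- or, in the variant, $I\vee K$ itself --- a legitimate non-trivial ideal, so that the previously established uniqueness results are actually available.
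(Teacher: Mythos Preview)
Your argument is correct, but it follows a genuinely different route from the paper's proof. The paper argues directly at the level of the probabilistic norm: given putative limits $x\neq y$, it fixes $r$ with $(1-r)*(1-r)\ge 1-\epsilon$, obtains $M_1,M_2\in F(I)$ from the definition, extracts the sets $N_i=\{s:\nu_{f(s)-x_i}(t/2)>1-r\}$, shows $(M_1\cap M_2)\setminus(N_1\cap N_2)\in K$, and then uses non-triviality of $I\vee K$ to force $N_1\cap N_2\neq\emptyset$; picking $k\in N_1\cap N_2$ and applying condition~(iv) of the probabilistic norm yields $\nu_{x-y}(t)>1-\epsilon$, hence $x=y$. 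Your main argument instead packages all the PN-space analysis into the already-proven Lemma~\ref{6chl1}(i), reducing the problem to the purely set-theoretic observation that $K|_{M_1\cap M_2}$ is a non-trivial ideal on a non-empty set, which is exactly what $I\vee K$ being non-trivial buys you. Your shorter variant is cleaner still, and in fact the implication $I_\nu^K$-$\lim f=p\Rightarrow(I\vee K)_\nu$-$\lim f=p$ is recorded separately in the paper (Theorem~3.5(ii)), so one could simply cite that and then Lemma~\ref{6chl1}(i). What the paper's approach buys is self-containment --- it does not rely on the earlier uniqueness lemma and re-derives the $t$-norm estimate in situ --- whereas your approach buys economy and makes transparent that the hypothesis on $I\vee K$ is used solely to guarantee non-triviality of the relevant ideal.
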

\begin{proof}
If possible consider that the function $f$ has two distinct $I_\nu^K$-limits say $x$ and $y$. Given $\epsilon(>0)$ and $t(>0)$. Choose $r>0$ such that $(1-r)*(1-r)\geq 1-\epsilon$. Since $f$ has $I_\nu^K$-limit $x$, so there exists a set $M_1\in F(I)$ such that the function $g:S \rightarrow P$ given by  
\[g(s)=\left\{\begin {array}{ll}
f(s) & \mbox{if $s\in M_1$} \\
x & \mbox{if $s\notin M_1$}
\end{array}
\right. \] 
is $K_\nu$-convergent to $x$.
Now for every $t>0$ and $r>0$ the set
$M_{\nu,x}^g=\{s\in S:\nu_{g(s)-x}(t)>1-r\}\in F(K)$. Let $M_{\nu,x}^f=\{s\in S:\nu_{f(s)-x}(t)>1-r\}$. Then, $M_{\nu,x}^g=(S\setminus M_1)\cup M_{\nu,x}^f=S \setminus (M_1\setminus M_{\nu,x}^f)\in F(K)$
i.e. $M_1\setminus M_{\nu,x}^f\in K$ or $M_1\setminus N_1\in K$ where $N_1= M_{\nu,x}^f$.\\
Similarly, if $f$ has $I_\nu^K$-limit $y$, then there exists a set $M_2\in F(I)$ such that $M_2\setminus M_{\nu,y}^f)\in K$ or $M_2\setminus N_2\in K$ where $N_2=M_{\nu,y}^f$
So,
\begin{equation}\label{6ch4}
(M_1\setminus N_1)\cup (M_2\setminus N_2)\in K
\end{equation}
Now let $p\in (M_1\cap M_2)\cap(N_1\cap N_2)^c = (M_1\cap M_2)\cap (N_1^c\cup N_2^c)=((M_1\cap M_2)\cap N_1^c)\cup ((M_1\cap M_2)\cap N_2^c)$ i.e either $p\in (M_1\cap M_2)\cap N_1^c\subset M_1\cap N_1^c$ or $p\in ((M_1\cap M_2)\cap N_2^c)\subset M_2\cap N_2^c $.
Therefore, $(M_1\cap M_2)\cap(N_1\cap N_2)^c\subset (M_1\cap N^c_1)\cup(M_2\cap N^c_2)\in K$ [from the equation (\ref{6ch4})]. Thus $(M_1\cap M_2)\cap(N_1\cap N_2)^c\in K$ i.e. $(M_1\cap M_2)\setminus (M_{\nu,x}^f\cap M_{\nu,y}^f)\in K$. 
If $ (M_{\nu,x}^f\cap M_{\nu,y}^f)=\phi$ then $M_1\cap M_2\in K$ i.e.
\begin{equation}\label{6ch6}
S\setminus (M_1\cap M_2)\in F(K)
\end{equation} 
As $M_1, M_2\in F(I)$ then
\begin{equation}\label{6ch7}
M_1\cap M_2\in F(I)
\end{equation} 
Since $I\vee K$ is non-trivial then the dual filter $F(I\vee K)=\{G\cap H:G\in F(I),H\in F(K)\}$. Now from the equation \ref{6ch6} and equation \ref{6ch7} we get $\phi\in F(I\vee K)$ which is a contradiction. So $ (M_{\nu,x}^f\cap M_{\nu,y}^f)\neq \phi$. So we have an element $k\in  (M_{\nu,x}^f\cap M_{\nu,y}^f)$. Now 
$$\nu_{x-y}(t)\geq \nu_{f(k)-x}(\frac{t}{2})*\nu_{f(k)-y}(\frac{t}{2})>(1-r)*(1-r)$$. 
Since $(1-r)*(1-r)\geq 1-\epsilon$ it follows that $\nu_{x-y}(t)>1-\epsilon$. Since $\epsilon$ is arbitrary so we get $\nu_{x-y}(t)=1$ for all $t>0$ which yields $x=y$. This completes the prove.
\end{proof}
\begin{theorem} \label{6chth10}
Let $I , K$ be two admissible ideals on a non empty set $S$ and $(P,\nu,*)$ be a PN Space. Then Fin$_\nu(S)$-convergence of a function $f:S\rightarrow P$ implies  $I_\nu^K$-convergence.
\end{theorem}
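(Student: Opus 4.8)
The plan is to reduce this to two facts already in hand: the monotonicity of ordinary $I_\nu$-convergence under enlargement of the ideal (Lemma \ref{6chl1}(ii)) and the passage from $K_\nu$-convergence to $I_\nu^K$-convergence (Lemma \ref{6ch0}). First I would record the elementary observation that, since $K$ is admissible, it contains every singleton of $S$ and hence, being closed under finite unions, every finite subset of $S$; that is, $\mathrm{Fin}(S)\subseteq K$. With this in place, suppose $\mathrm{Fin}_\nu(S)$-$\lim f=p$. Applying Lemma \ref{6chl1}(ii) to the inclusion $\mathrm{Fin}(S)\subseteq K$ yields $K_\nu$-$\lim f=p$, and then Lemma \ref{6ch0} converts this into $I_\nu^K$-$\lim f=p$, which is exactly the assertion. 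It is worth noting that admissibility of $I$ is not needed for the argument: only the fact that $I$ is a nontrivial ideal, so that $S\in F(I)$, together with admissibility of $K$, is used.

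Alternatively one can argue directly without invoking the lemmas, which is perhaps the more transparent route. Take $M=S$; since $I$ is nontrivial we have $S\in F(I)$, so the auxiliary function $g$ appearing in the definition of $I_\nu^K$-convergence coincides with $f$. For every $\epsilon>0$ and $t>0$, the set $A(\epsilon,t)=\{s\in S:\nu_{f(s)-p}(t)\le 1-\epsilon\}$ is finite by the definition of $\mathrm{Fin}_\nu(S)$-convergence, and hence lies in $K$ because $K$ is admissible; thus $g=f$ is $K_\nu$-convergent to $p$, and therefore $f$ is $I_\nu^K$-convergent to $p$ with $I_\nu^K$-$\lim f=p$.

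There is essentially no hard step here; the statement is a direct chaining of earlier results. The only points requiring (routine) care are the verification that an admissible ideal contains all finite subsets of $S$, and the bookkeeping that $S\in F(I)$ for every nontrivial ideal $I$, so that $M=S$ is a legitimate choice in the definition of $I_\nu^K$-convergence.
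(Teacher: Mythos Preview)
Your proposal is correct and, in particular, your second ``direct'' argument is essentially identical to the paper's proof: the paper shows the set $A(\epsilon,t)$ is finite, observes that admissibility of $K$ forces $A(\epsilon,t)\in K$, concludes $K_\nu$-convergence, and then invokes Lemma~\ref{6ch0}. Your first version just repackages the step ``finite $\Rightarrow$ in $K$'' as the inclusion $\mathrm{Fin}(S)\subseteq K$ together with Lemma~\ref{6chl1}(ii), which amounts to the same thing; your remark that admissibility of $I$ is not actually used is also correct.
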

\begin{proof}
Let a function $f:S\rightarrow P$ be Fin$_\nu(S)$-convergent to $x$ then for each $\epsilon(>0)$ and $t(>0)$ the set
$A(\epsilon, t)=\{s\in S:\nu_{f(s)-\xi}(t)\leq 1-\epsilon\}$ is a finite set. As $K$ is an admissible ideal so $A(\epsilon, t)\in K$. Therefore $f$ is $K_\nu $-convergent $x$. Hence using Lemma \ref{6ch0}
we get $f$ is $I_\nu ^K$-convergent to $x$.
\end{proof}
\begin{theorem}
Let $I , K, I \vee K$ be non trivial  admissible ideals on a non empty set $S$ and $(P, \nu, *)$ be a PN Space. Consider a function $f:S\rightarrow P$ such that $I_\nu ^K- \lim f(s)= l_1$ and $I_\nu ^K- \lim g(s)= l_2$ for some $s\in S$. Then 
(i) $I_\nu ^K- \lim (f(s)+g(s))= l_1 + l_2$ and 
(ii) $I_\nu ^K- \lim (f(s)g(s)= l_1  l_2$
\end{theorem}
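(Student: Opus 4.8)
The plan is to reduce both assertions to the corresponding facts for ordinary $K_\nu$-convergence and then to exploit the triangle inequality axiom (iv) of the probabilistic norm (together with the scalar axiom (iii) for the product). First, using the reformulation of $I_\nu^K$-convergence in terms of restrictions, the hypothesis $I_\nu^K$-$\lim f = l_1$ yields a set $M_1\in F(I)$ such that $f|_{M_1}$ is $K_\nu|_{M_1}$-convergent to $l_1$, and $I_\nu^K$-$\lim g = l_2$ yields $M_2\in F(I)$ such that $g|_{M_2}$ is $K_\nu|_{M_2}$-convergent to $l_2$. Set $M=M_1\cap M_2$; since $F(I)$ is a filter, $M\in F(I)$. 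Because $M\subseteq M_i$, a routine check shows that $K_\nu|_{M_i}$-convergence on $M_i$ restricts to $K_\nu|_{M}$-convergence on $M$ (if the exceptional set has the form $A\cap M_i$ with $A\in K$, then on $M$ it equals $A\cap M\in K|_M$). Hence $f|_M$ and $g|_M$ are both $K_\nu|_M$-convergent, to $l_1$ and $l_2$ respectively, and it suffices to show that $(f+g)|_M$ and $(fg)|_M$ are $K_\nu|_M$-convergent to $l_1+l_2$ and $l_1 l_2$; the same $M$ then witnesses $I_\nu^K$-convergence in (i) and (ii). So the problem is reduced to proving that ordinary ideal convergence with respect to $\nu$ (for the ideal $K|_M$ on $M$, which we may treat as $K$ on $S$) is preserved under sums and products.

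For (i), fix $\epsilon>0$ and $t>0$ and, as in the uniqueness proof, choose $r>0$ with $(1-r)*(1-r)\geq 1-\epsilon$. Axiom (iv) gives
$$\nu_{(f(s)+g(s))-(l_1+l_2)}(t)\;\geq\;\nu_{f(s)-l_1}\!\left(\frac{t}{2}\right)*\nu_{g(s)-l_2}\!\left(\frac{t}{2}\right),$$
so
$$\{s: \nu_{(f(s)+g(s))-(l_1+l_2)}(t)\leq 1-\epsilon\}\subseteq\{s:\nu_{f(s)-l_1}(\tfrac{t}{2})\leq 1-r\}\cup\{s:\nu_{g(s)-l_2}(\tfrac{t}{2})\leq 1-r\}.$$
Each set on the right lies in $K$ by the convergence of $f$ and $g$, hence so does their union, which proves (i).

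For (ii), the plan is to use the algebraic splitting
$$f(s)g(s)-l_1 l_2 \;=\; g(s)\bigl(f(s)-l_1\bigr)\;+\;l_1\bigl(g(s)-l_2\bigr),$$
and then to estimate $\nu$ of the right-hand side with axioms (iii) and (iv) and the same $r$-trick as in (i). The term $l_1(g(s)-l_2)$ is immediately controlled by axiom (iii), since $\nu_{l_1(g(s)-l_2)}(t/2)=\nu_{g(s)-l_2}(t/(2|l_1|))$ when $l_1\neq 0$ (and the term is $0$ when $l_1=0$), so its exceptional set lies in $K$. For the term $g(s)(f(s)-l_1)$ one first passes to a set $G\in F(K)$ on which $g(s)$ is close to $l_2$, hence effectively bounded by some constant $C>0$; on $G$ the scalar axiom (iii) and monotonicity of $\nu$ give $\nu_{g(s)(f(s)-l_1)}(t/2)\geq \nu_{f(s)-l_1}(t/(2C))$, so the set where this is small is again, up to a set in $K$, one of the exceptional sets of $f$. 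Combining the two estimates, $\{s:\nu_{f(s)g(s)-l_1 l_2}(t)\leq 1-\epsilon\}$ is contained in a finite union of members of $K$, hence lies in $K$, proving (ii).

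I expect the product case to be the main obstacle, for two reasons: it presupposes that the pointwise product $f(s)g(s)$ makes sense and interacts with $\nu$ through the scalar axiom (iii) — automatic when $P=\mathbb{R}$ but requiring an extra compatible algebra structure in a general PNS — and the bound on $\nu_{g(s)(f(s)-l_1)}$ needs one first to extract a set in $F(K)$ on which $g$ is bounded, so the order in which the estimates are carried out matters. Once these points are settled, everything collapses to the ideal property of $K$ together with the $(1-r)*(1-r)\geq 1-\epsilon$ device already used in part (i) and in the uniqueness theorem.
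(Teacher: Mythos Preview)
Your argument is correct and follows the same route as the paper: pass to $M=M_1\cap M_2\in F(I)$ and reduce everything to the fact that $K_\nu$-convergence (on $M$) is preserved under sums and products. The only differences are cosmetic---the paper works with the modified functions $u,v,h$ rather than restrictions, invokes Theorem~2.3 of \cite{MuM} for the sum instead of deriving it from axiom (iv) as you do, and dismisses (ii) entirely as ``similar technique''; your explicit sketch of (ii), including the caveat that it tacitly presupposes a compatible algebra structure on $P$ and a boundedness step for $g$, is in fact more careful than what the paper provides.
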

\begin{proof}
Suppose $I^K_\nu$-$\lim f= l_1$. Then by definition of $I^K_\nu$-convergence there exist a set $M_1\in F(I)$ such that the function $u:S\rightarrow P$ given by
\[u(s)=\left\{\begin {array}{ll}
f(s) & \mbox{if $s\in M_1$} \\
l_1 & \mbox{if $s\notin M_1$}
\end{array}
\right. \] \\
is $K_\nu$-convergent to $l_1$.	
Again $I^K_\nu$-$\lim g= l_2$ so by definition of $I^K_\nu$-convergence there exist a set $M_2\in F(I)$ such that the function $v:S\rightarrow P$ given by
\[v(s)=\left\{\begin {array}{ll}
f(s) & \mbox{if $s\in M_2$} \\
l_2 & \mbox{if $s\notin M_2$}
\end{array}
\right. \] \\
is $K_\nu$-convergent to $l_2$.	Now by the Theorem 2.3 of \cite{MuM} the function $u(s)+v(s)$ defined by
\begin{equation}\label{6cheq1}
u(s)+v(s)=\left\{\begin {array}{llll}
f(s) +g(s) & \mbox{if $s\in M_1 \cap M_2$} \\
f(s)+l_2 & \mbox{if $s\in M_1 \setminus M_2$}\\
g(s)+l_2 & \mbox{if $s\in M_2 \setminus M_1$}\\
l_1+l_2 & \mbox{if $s\in (M_1 \cup M_2)^c $}\\
\end{array}
\right. 
\end{equation}
is $K_\nu$-convergent to $l_1+l_2$. In another word , for each $\epsilon(>0)$ and $t(>0)$ the set 
\begin{equation}\label{6cheq2}
\{s\in S:\nu_{u(s)+v(s)-(l_1+l_2)}(t)\leq 1-\epsilon\}\in K
\end{equation}
From the equation \ref{6cheq1} and \ref{6cheq2} we have
\begin{equation}\label{6cheq3}
\{s\in S:\nu_{u(s)+v(s)-(l_1+l_2)}(t)\leq 1-\epsilon\}=
\begin{array}{lll}
\{s\in M_1 \cap M_2:\nu_{f(s)+g(s)-(l_1+l_2)}(t)\leq 1-\epsilon\} &\\
\cup \{s\in M_1 \setminus M_2:\nu_{f(s)-l_1}(t)\leq 1-\epsilon\} & \\
\cup \{s\in M_2 \setminus M_1:\nu_{g(s)-l_2}(t)\leq 1-\epsilon\} &
\end{array}
\end{equation}
It is obvious that $M_1 \cap M_2 \in F(I)$. Now we consider the another function $h:S\rightarrow P$ defined as
\[h(s)=\left\{\begin {array}{ll}
f(s)+g(s) & \mbox{if $s\in M_1 \cap M_2$} \\
l_1+l_2 & \mbox{if $s\notin M_1 \cap M_2$}
\end{array}
\right. \] 
Then from the equation \ref{6cheq2} and \ref{6cheq3} and by definition of $h$ function we have,\\
$\{s\in S:\nu_{h(s)-(l_1+l_2)}(t)\leq 1-\epsilon\}= \{s\in M_1 \cap M_2:\nu_{h(s)-(l_1+l_2)}(t)\leq 1-\epsilon\} \cup \{s\in(M_1 \cap M_2)^c :\nu_{h(s)-(l_1+l_2)}(t)\leq 1-\epsilon\}= \{s\in M_1 \cap M_2:\nu_{h(s)-(l_1+l_2)}(t)\leq 1-\epsilon\} \cup \{s\in(M_1 \cap M_2)^c :\nu_{0}(t)\leq 1-\epsilon\}= \{s\in M_1 \cap M_2:\nu_{f(s)+g(s)-(l_1+l_2)}(t)\leq 1-\epsilon\} \subset \{s\in S:\nu_{u(s)+v(s)-(l_1+l_2)}(t)\leq 1-\epsilon\} \in K $
So $h$ is $K_\nu$-convergent to $l_1+l_2$. Hence $f(s)+g(s)$ is $I_\nu^K$-convergent to  $l_1+l_2$.\\
(ii) We omitted the proof of (ii) as it can be obtained by applying similar technique of the above proof.  
\end{proof}
\begin{theorem}\label{6chth11}
Let $I , K$ be two admissible ideals on a non empty set $S$ and $(P,\nu,*)$ be a PN Space. Then for any function $f:S\rightarrow P$, $I_\nu^*$-convergence implies  $I_\nu^K$-convergence.
\end{theorem}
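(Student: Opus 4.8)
The plan is to reduce the statement to facts already established in the paper. First I would observe that $I^*_\nu$-convergence is nothing but the special case $K=\mathrm{Fin}(S)$ of $I^K_\nu$-convergence: in both definitions one asks for a set $M\in F(I)$ such that the associated function $g$ (agreeing with $f$ on $M$ and equal to $p$ off $M$) converges with respect to $\nu$ in the relevant ideal, and $\mathrm{Fin}(S)_\nu$-convergence of $g$ is exactly $\mathrm{Fin}(S)$-convergence of $g$ with respect to $\nu$. Hence ``$I^*_\nu\text{-}\lim f=p$'' means precisely ``$I^{\mathrm{Fin}(S)}_\nu\text{-}\lim f=p$''.

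Next I would invoke the admissibility of $K$. Since $K$ is admissible it contains every singleton of $S$, and being closed under finite unions it therefore contains every finite subset of $S$; that is, $\mathrm{Fin}(S)\subseteq K$. Now Proposition \ref{6chpro1}(ii), applied with the inclusion $\mathrm{Fin}(S)\subseteq K$ in the superscript position, gives at once that $I^{\mathrm{Fin}(S)}_\nu\text{-}\lim f=p$ implies $I^{K}_\nu\text{-}\lim f=p$, which is the claim.

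If one prefers a self-contained argument that does not route through Proposition \ref{6chpro1}(ii), I would instead unpack the hypothesis directly: let $M\in F(I)$ be a set witnessing $I^*_\nu$-convergence of $f$ to $p$, so the associated function $g$ is $\mathrm{Fin}(S)$-convergent to $p$ with respect to $\nu$; since $\mathrm{Fin}(S)\subseteq K$, Lemma \ref{6chl1}(ii) shows $g$ is also $K_\nu$-convergent to $p$; and then this same set $M$ witnesses that $f$ is $I^K_\nu$-convergent to $p$. In either route the only point requiring attention is the inclusion $\mathrm{Fin}(S)\subseteq K$, which is exactly where the admissibility of $K$ enters; there is no genuine obstacle beyond this bookkeeping, and in particular admissibility of $I$ is not needed for this implication.
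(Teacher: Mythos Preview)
Your proposal is correct and essentially follows the paper's argument. The paper unpacks $I^*_\nu$-convergence to obtain $M\in F(I)$ with $g$ $\mathrm{Fin}(S)$-convergent, then invokes Theorem~\ref{6chth10} (whose proof uses admissibility of $K$ to pass from $\mathrm{Fin}(S)$-convergence to $K_\nu$-convergence) and Lemma~\ref{6ch0} to conclude; your second route is exactly this, just citing Lemma~\ref{6chl1}(ii) and the definition in place of those two results, and your first route via Proposition~\ref{6chpro1}(ii) packages the same inclusion $\mathrm{Fin}(S)\subseteq K$ one level higher. Your remark that admissibility of $I$ is unused is also accurate.
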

\begin{proof}
Let a function $f:S\rightarrow P$ be $I_\nu^*$-convergent to $l$ then by definition of $I_\nu ^K $ there exists a set $M\in F(I)$ such that the function $g:S\rightarrow P$ defined by
\[g(s)=\left\{\begin {array}{ll}
f(s) & \mbox{if $s\in M$} \\
l & \mbox{if $s\notin M$}
\end{array}
\right. \] \\
is Fin$(S)$-convergent to $l$ with respect to norm $\nu$. Therefore by Theorem \ref{6chth10} the function $f$ is $K_\nu $-convergent $l$. Hence using Lemma \ref{6ch0}
we have $f$ is $I_\nu ^K$-convergent to $l$.
\end{proof}
\begin{theorem}
Let $I , K$ be two admissible ideals on a non empty set $S$ and $(P,\nu,*)$ be a PN Space. If the ideal $I$ has the additive property with respect to $K$ then for any function $f:S\rightarrow P$, $I_\nu$-convergence implies  $I_\nu^K$-convergence.
\end{theorem}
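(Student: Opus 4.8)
The plan is to adapt the classical argument that the AP condition forces $I$-convergence to coincide with $I^{*}$-convergence, now in the double-ideal setting and accounting for the two-parameter nature of the probabilistic norm. So assume $I_\nu$-$\lim f = p$; the goal is to exhibit a set $M \in F(I)$ such that the truncated function $g$ (equal to $f$ on $M$ and equal to $p$ off $M$) is $K_\nu$-convergent to $p$, which is precisely $I_\nu^K$-$\lim f = p$.

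First I would collapse the parameters $\epsilon$ and $t$ into a single natural-number index. For $n \in \mathbb{N}$ put
$$C_n = \{s \in S : \nu_{f(s)-p}(1/n) \le 1 - 1/n\}.$$
Each $C_n$ lies in $I$ by hypothesis, and since $t \mapsto \nu_{f(s)-p}(t)$ is non-decreasing the family $(C_n)_{n}$ is increasing. Set $A_1 = C_1$ and $A_n = C_n \setminus C_{n-1}$ for $n \ge 2$; these are pairwise disjoint members of $I$ with $C_n = A_1 \cup \dots \cup A_n$. Now invoke AP$(I,K)$ to get sets $B_n \in I$ with $A_n \triangle B_n \in K$ for all $n$ and $B := \bigcup_{n \in \mathbb{N}} B_n \in I$, and take $M = S \setminus B \in F(I)$.

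It remains to verify that the associated $g$ is $K_\nu$-convergent to $p$. Fix $\epsilon > 0$ and $t > 0$. Since $g(s) = p$ for $s \notin M$, one has $\nu_{g(s)-p}(t) = 1$ there, so $\{s \in S : \nu_{g(s)-p}(t) \le 1-\epsilon\} = \{s \in M : \nu_{f(s)-p}(t) \le 1-\epsilon\}$. Choosing $n_0$ with $1/n_0 \le \min\{\epsilon, t\}$ and using monotonicity of $\nu$ in the second slot, this set is contained in $C_{n_0} \cap M = C_{n_0} \setminus B$. Because $C_{n_0} = \bigcup_{j \le n_0} A_j$ and $s \notin B$ forces $s \notin B_j$ for every $j$, a short set-chase gives $C_{n_0} \setminus B \subseteq \bigcup_{j \le n_0}(A_j \setminus B_j) \subseteq \bigcup_{j \le n_0}(A_j \triangle B_j)$, and the right-hand side is a finite union of members of $K$, hence in $K$. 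By heredity of $K$ we conclude $\{s \in S : \nu_{g(s)-p}(t) \le 1-\epsilon\} \in K$, so $g$ is $K_\nu$-convergent to $p$ and $f$ is $I_\nu^K$-convergent to $p$.

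The bookkeeping with monotonicity, finite unions, and heredity is routine. The one genuinely delicate point --- and the only place where the probabilistic norm behaves differently from an ordinary norm --- is the reduction from the arbitrary pair $(\epsilon, t)$ to a single index $n_0$: one must be sure that the exceptional set for any $\epsilon$ and $t$ is absorbed, modulo a $K$-set, by only finitely many of the $A_n$'s, since this uniformity in both parameters is exactly what lets the single diagonal set $B$ (and hence the single $M$) serve for all $\epsilon$ and $t$ simultaneously.
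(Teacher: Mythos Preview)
Your argument is correct. The collapse of the two parameters $(\epsilon,t)$ into the single diagonal sequence $C_n=\{s:\nu_{f(s)-p}(1/n)\le 1-1/n\}$ works because of the monotonicity of the distribution function, the disjointification is standard, and the finite-union estimate $C_{n_0}\setminus B\subseteq\bigcup_{j\le n_0}(A_j\triangle B_j)\in K$ is exactly what is needed to see that $g$ is $K_\nu$-convergent to $p$.

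The paper, however, argues in a completely different way: it first claims that under AP$(I,K)$ one has $I_\nu$-convergence $\Rightarrow$ $I_\nu^{*}$-convergence (citing a result of Mursaleen--Mohiuddine), and then invokes the earlier theorem that $I_\nu^{*}$-convergence implies $I_\nu^{K}$-convergence. Your direct route avoids the intermediate detour through $I_\nu^{*}$ entirely and uses AP$(I,K)$ exactly once, in its genuine double-ideal form. This is not just a stylistic difference: the cited implication $I_\nu\Rightarrow I_\nu^{*}$ really requires the classical AP condition (that is, AP$(I,\mathrm{Fin})$), since $I_\nu^{*}$-convergence demands $\mathrm{Fin}$-convergence on a set in $F(I)$; the weaker hypothesis AP$(I,K)$ only yields $A_n\triangle B_n\in K$, which need not be finite. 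So the paper's chain of implications, as written, does not obviously go through from the stated hypothesis, whereas your direct argument extracts $I_\nu^{K}$-convergence straight from AP$(I,K)$ without over-claiming.
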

\begin{proof}
If the AP$(I,K)$ condition holds, then from Theorem 3.2 in \cite{MuM} we have $I_\nu$-convergence implies  $I_\nu^*$-convergence. Again from the Theorem \ref{6chth11} we have $I_\nu^*$-convergence implies  $I_\nu^K$-convergence. Hence $f$ is $I_\nu^K$-convergence function. 
\end{proof}

\section{$I_\nu^K$-Cauchy functions}
We can now define in a full generality the idea of Cauchy functions and drive some basic observations.
\begin{definition}[cf. \cite{KV}]
Let $(P,\nu,*)$ be a PNS. A function $f:S\rightarrow P$ is called $I$-Cauchy with respect to probabilistic norm $\nu$ or $I_\nu$-Cauchy function if for each $\epsilon (>0)$ and $t(>0)$ there exists an element $m\in S$ such that the set
$$\{s\in S:\nu_{f(s)-f(m)}(t)\leq 1-\epsilon \}\in I$$ 
\end{definition}
\begin{definition}[cf. \cite{KV}]
Let $(P,\nu,*)$ be a PNS. A function $f:S\rightarrow P$ is called $I^*$-Cauchy with respect to probabilistic norm $\nu$ or $I_\nu^*$-Cauchy function  if there is a subset $M\subseteq S$ such that $M\in F(I)$ and the function $f|_M$ is a Cauchy function with respect to probabilistic norm $\nu$.
\end{definition}

\begin{lemma}
Let $(P,\nu,*)$be a PNS and $I$ be an ideal on a set $S$. For a function $f:S\rightarrow P$ following are equivalent.
\item (i) $f$ is $I_\nu$-Cauchy.
\item (ii) For any $\epsilon (>0)$ and $t(>0)$ there exists an element $m\in S$ such that the set $\{s\in S:\nu_{f(s)-f(m)}(t)>1-\epsilon \}\in F(I)$.
\item (iii) For any $\epsilon (>0)$ and $t(>0)$ there exists an element $m\in S$ such that the set $\{s\in S:|\nu_{f(s)-f(m)}(t)-1|\geq \epsilon \}\in I$.
\end{lemma}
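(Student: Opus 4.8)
The plan is to follow the same template as the earlier characterisation theorem for $I_\nu$-convergence, the single ingredient being that every distribution function $\nu_x\in\bigtriangleup$ takes values in $[0,1]$, so the ``upper tail'' $\{s\in S:\nu_{f(s)-f(m)}(t)\ge 1+\epsilon\}$ is always empty. For a fixed $m\in S$, $\epsilon>0$ and $t>0$ I would abbreviate $E_m=\{s\in S:\nu_{f(s)-f(m)}(t)\le 1-\epsilon\}$ and $D_m=\{s\in S:|\nu_{f(s)-f(m)}(t)-1|\ge\epsilon\}$, and record that $S\setminus E_m=\{s\in S:\nu_{f(s)-f(m)}(t)>1-\epsilon\}$.

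First, (i)$\Leftrightarrow$(ii) is nothing more than the duality $A\in I\iff S\setminus A\in F(I)$: by definition $f$ is $I_\nu$-Cauchy means that for every $\epsilon,t$ there is $m\in S$ with $E_m\in I$, and passing to complements gives exactly (ii) with the same witness $m$. Next, for (ii)$\Rightarrow$(iii) I would fix $\epsilon,t$, take the $m$ provided by (ii), and decompose
\[ D_m=\{s\in S:\nu_{f(s)-f(m)}(t)\ge 1+\epsilon\}\ \cup\ \{s\in S:\nu_{f(s)-f(m)}(t)\le 1-\epsilon\}; \]
the first summand is empty because $\nu_{f(s)-f(m)}(t)\le 1$ for all $s$, while the second is $E_m\in I$, and a finite union of members of an ideal again lies in the ideal, so $D_m\in I$. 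Finally, for (iii)$\Rightarrow$(i): with the $m$ supplied by (iii) one has $E_m\subseteq D_m\in I$, whence $E_m\in I$ by the hereditary property of ideals, i.e.\ $f$ is $I_\nu$-Cauchy.

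I do not anticipate any genuine obstacle here; the only points demanding care are the quantifier bookkeeping — verifying that the element $m$ produced for a given pair $(\epsilon,t)$ in one formulation can be reused verbatim as the witness in the others — and the observation that the boundedness $\nu_x\le 1$ is precisely what substitutes for the missing upper tail, exactly as in the convergence case. Closing the cycle (i)$\Rightarrow$(ii)$\Rightarrow$(iii)$\Rightarrow$(i) through the decomposition of $D_m$ above constitutes the whole argument, and the remaining manipulations are routine set theory.
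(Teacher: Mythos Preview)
Your proposal is correct and follows essentially the same route as the paper's proof: both use the duality $A\in I\iff S\setminus A\in F(I)$ for (i)$\Leftrightarrow$(ii), and the same decomposition of $D_m$ into the empty upper tail and $E_m$ for (ii)$\Rightarrow$(iii), mirroring the paper's earlier characterisation of $I_\nu$-convergence. Your closing step (iii)$\Rightarrow$(i) via $E_m\subseteq D_m$ and heredity is a bit more explicit than the paper's ``converse follows similarly'', but the substance is identical.
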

\begin{proof}
Suppose that $f$ is $I_\nu$-Cauchy function.  So the proof of  (i) implies (ii) is straightforward. Again consider the set $\{s\in S:|\nu_{f(s)-f(m)}(t)-1|\geq \epsilon \}= \{s\in S:\nu_{f(s)-f(m)}(t)\geq 1+\epsilon \} \cup \{s\in S:\nu_{f(s)-f(m)}(t)\leq 1-\epsilon \} $. Since for any $\epsilon > 0$ the set  $\{s\in S:\nu_{f(s)-f(m)}(t)\geq 1+\epsilon \}=\phi \in I $ and $\{s\in S:\nu_{f(s)-f(m)}(t)\leq 1-\epsilon \} \in I$ (as from (ii) we have  $\{s\in S:\nu_{f(s)-f(m)}(t)> 1-\epsilon \}\in F(I)$ ). Therefore the set $\{s\in S:|\nu_{f(s)-f(m)}(t)-1|\geq \epsilon \}\in I$. Hence (ii) implies (iii). Converse part follows in similar technique. Hence the result holds.  
\end{proof}.
\begin{proposition} \cite{KV}\label{6chpro5.1}
Let $(P,\nu,*)$be a PNS and $I$ be an admissible ideal on a set $S$ and  $f:S\rightarrow P$ be a function. If $f$ is Cauchy function with respect to $\nu$ then $f$ is $I_\nu$-Cauchy function.
\end{proposition}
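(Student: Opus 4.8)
The plan is to unwind the two relevant definitions and reduce the claim to the behaviour of the finite-subset ideal $\mathrm{Fin}(S)$ under an admissible ideal. Suppose $f:S\to P$ is a Cauchy function with respect to the probabilistic norm $\nu$. By definition, for each $\epsilon>0$ and $t>0$ there exists an element $m\in S$ such that the set $A(\epsilon,t,m)=\{s\in S:\nu_{f(s)-f(m)}(t)\leq 1-\epsilon\}$ is finite. Since $I$ is an admissible ideal on $S$, every finite subset of $S$ belongs to $I$; in particular $A(\epsilon,t,m)\in I$. Thus for each $\epsilon>0$ and $t>0$ there is an $m\in S$ with $\{s\in S:\nu_{f(s)-f(m)}(t)\leq 1-\epsilon\}\in I$, which is exactly the defining condition for $f$ to be an $I_\nu$-Cauchy function.

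I would present this in three short steps: first, restate the hypothesis that $f$ is $\nu$-Cauchy in the form "for all $\epsilon,t>0$ there is $m\in S$ with $A(\epsilon,t,m)$ finite"; second, invoke admissibility of $I$ to conclude $A(\epsilon,t,m)\in I$ (using that an ideal is closed under subsets, so any finite set, being a finite union of singletons each lying in $I$, lies in $I$); third, observe that this is precisely the definition of $I_\nu$-Cauchy and conclude. No estimate with the $t$-norm $*$ or the triangle inequality (iv) is needed here, since the element $m$ is carried over unchanged from the Cauchy condition to the $I_\nu$-Cauchy condition.

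There is essentially no obstacle: the only point to be careful about is that the element $m$ witnessing the Cauchy condition for given $\epsilon,t$ is the same element that witnesses the $I_\nu$-Cauchy condition for those $\epsilon,t$, so the quantifier structure ($\forall \epsilon\,\forall t\,\exists m$) is preserved verbatim. This mirrors the analogous implication "convergence $\Rightarrow$ $I_\nu$-convergence" embodied in Lemma \ref{6ch0} and Theorem \ref{6chth10}, and in fact one could alternatively phrase the argument as: the $\nu$-Cauchy condition is the $\mathrm{Fin}(S)_\nu$-Cauchy condition, and since $\mathrm{Fin}(S)\subseteq I$ for admissible $I$, the monotonicity of the Cauchy notion in the ideal (every set in $\mathrm{Fin}(S)$ is in $I$) yields the result.
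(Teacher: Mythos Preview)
Your proof is correct and is exactly the natural argument: the $\nu$-Cauchy condition yields a finite exceptional set, and admissibility of $I$ (closure under finite unions of singletons) places that finite set in $I$, giving the $I_\nu$-Cauchy condition with the same witness $m$. The paper itself does not supply a proof of this proposition; it is simply cited from \cite{KV}, so there is nothing to compare against beyond noting that your argument is the standard one.
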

\begin{theorem} \cite{KV}
Let $(P,\nu,*)$be a PNS and $I$ be an admissible ideal on a set $S$ and  $f:S\rightarrow P$ be a function. If $f$ is $I_\nu^*$-Cauchy function then $f$ is $I_\nu$-Cauchy function.
\end{theorem}
\begin{theorem} \cite{KV}\label{6chth5.1}
In PNS every $I_\nu$-convergent function is $I_\nu$-Cauchy.
\end{theorem}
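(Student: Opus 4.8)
The plan is to obtain the $I_\nu$-Cauchy condition directly from $I_\nu$-convergence by inserting the limit point $p$ through the triangle-type inequality (iv) of a PN space. Assume $I_\nu$-$\lim f = p$ and fix arbitrary $\epsilon>0$ and $t>0$. First I would pick $r>0$ small enough that $(1-r)*(1-r)\ge 1-\epsilon$; this is available from the behaviour of the $t$-norm near the neutral element $1$, and it is exactly the same device used in the uniqueness argument earlier in the paper, where $r$ is chosen with $(1-r)*(1-r)\ge 1-\epsilon$.

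Next I would apply the definition of $I_\nu$-convergence with parameters $r$ and $t/2$: the set $A=\{s\in S:\nu_{f(s)-p}(t/2)\le 1-r\}$ lies in $I$, so $A^{c}=\{s\in S:\nu_{f(s)-p}(t/2)>1-r\}\in F(I)$. Since $I$ is non-trivial we have $\emptyset\notin F(I)$, hence $A^{c}\ne\emptyset$; choose any $m\in A^{c}$, so that $\nu_{f(m)-p}(t/2)>1-r$, and note $\nu_{p-f(m)}(t/2)=\nu_{f(m)-p}(t/2)$ by (iii) with $\alpha=-1$. This element $m$ will serve as the witness required in the definition of an $I_\nu$-Cauchy function.

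Then, for every $s\in A^{c}$, property (iv) applied to $f(s)-p$ and $p-f(m)$ with the split $t=t/2+t/2$, followed by the monotonicity of $*$, gives $\nu_{f(s)-f(m)}(t)\ge \nu_{f(s)-p}(t/2)*\nu_{p-f(m)}(t/2)>(1-r)*(1-r)\ge 1-\epsilon$. Hence $\{s\in S:\nu_{f(s)-f(m)}(t)>1-\epsilon\}\supseteq A^{c}$, and since $A^{c}\in F(I)$ this superset is also in $F(I)$, i.e.\ $\{s\in S:\nu_{f(s)-f(m)}(t)\le 1-\epsilon\}\in I$. As $\epsilon$ and $t$ were arbitrary, $f$ is $I_\nu$-Cauchy.

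The only place that needs care is the very first step: guaranteeing an $r>0$ with $(1-r)*(1-r)\ge 1-\epsilon$, which is the mild standing hypothesis on the $t$-norm implicit throughout (and in \cite{KV}); once it is granted, the rest is the routine two-term triangle-inequality estimate, identical in spirit to the uniqueness proof above. One should also be careful that the non-triviality of $I$ is invoked only to produce the witness $m$ (through $\emptyset\notin F(I)$), so that the statement does not quietly require admissibility of $I$.
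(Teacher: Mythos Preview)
Your argument is correct and is precisely the standard route: choose $r$ with $(1-r)*(1-r)\ge 1-\epsilon$, apply $I_\nu$-convergence at level $(r,t/2)$ to get a set in $F(I)$, extract a witness $m$ from it by non-triviality of $I$, and conclude via property~(iv). Note, however, that the paper does not supply its own proof of this theorem; it is simply quoted from \cite{KV}, so there is no in-paper argument to compare against. Your proof is exactly the one that \cite{KV} gives (for sequences, here transcribed to functions), and it also mirrors the device used in the uniqueness proof earlier in the present paper, as you point out.
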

\begin{theorem} \cite{KV}\label{6chth5.2}
In PNS every $I_\nu^*$-convergent function is $I_\nu^*$-Cauchy.
\end{theorem}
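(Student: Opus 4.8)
The plan is to peel off the definition of $I_\nu^*$-convergence, pass to the restriction of the auxiliary function to the associated $F(I)$-set, and there invoke the already-proved implication ``$I_\nu$-convergent $\Rightarrow$ $I_\nu$-Cauchy'' (Theorem \ref{6chth5.1}) in the special case of the ideal Fin.

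First I would unpack the hypothesis: if $f:S\rightarrow P$ is $I_\nu^*$-convergent, say to $p\in P$, then by definition there is a set $M\in F(I)$ such that the function $g:S\rightarrow P$ with $g(s)=f(s)$ on $M$ and $g(s)=p$ off $M$ is Fin$(S)$-convergent to $p$ with respect to $\nu$. Two elementary remarks drive the proof: $g$ and $f$ agree on $M$, so $f|_M=g|_M$; and the restriction $g|_M:M\rightarrow P$ is Fin$(M)$-convergent to $p$ with respect to $\nu$, because for every $\epsilon>0$, $t>0$ the set $\{s\in M:\nu_{g(s)-p}(t)\leq 1-\epsilon\}$ sits inside the finite set $\{s\in S:\nu_{g(s)-p}(t)\leq 1-\epsilon\}$ and is therefore finite.

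Next I would apply Theorem \ref{6chth5.1} with ground set $M$ and ideal Fin$(M)$: a function that is Fin$(M)$-convergent on $M$ with respect to $\nu$ is Fin$(M)$-Cauchy on $M$, hence $f|_M=g|_M$ is a Cauchy function with respect to $\nu$. (If one prefers to avoid this reference: given $\epsilon>0$, $t>0$, pick $r>0$ with $(1-r)*(1-r)\geq 1-\epsilon$; when $M$ is infinite the set $E=\{s\in M:\nu_{g(s)-p}(t/2)>1-r\}$ is cofinite in $M$, hence nonempty, so choose $m\in E$, and for every $s\in E$ combine property (iv) with property (iii) — the latter gives $\nu_{p-g(m)}(t/2)=\nu_{g(m)-p}(t/2)$ — to obtain $\nu_{g(s)-g(m)}(t)\geq\nu_{g(s)-p}(t/2)*\nu_{g(m)-p}(t/2)>(1-r)*(1-r)\geq 1-\epsilon$; thus $\{s\in M:\nu_{f(s)-f(m)}(t)\leq 1-\epsilon\}\subseteq M\setminus E$ is finite, the case $M$ finite being trivial.) Since $M\in F(I)$ and $f|_M$ is now a Cauchy function with respect to $\nu$, the definition of $I_\nu^*$-Cauchyness is satisfied, so $f$ is $I_\nu^*$-Cauchy.

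The only step that needs care is the move from the global function $g$ on $S$ to its restriction to $M$: one must make sure the Cauchy witness $m$ can be chosen inside $M$, which is exactly why one works with the subset $E$ that is cofinite in $M$ rather than with a cofinite subset of $S$. No appeal to the AP$(I,K)$ condition or to any finer structure of $I$ beyond $M\in F(I)$ is needed, so the argument is essentially a bookkeeping translation of the unstarred result.
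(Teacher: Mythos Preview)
Your argument is correct: the passage from $I_\nu^*$-convergence to a set $M\in F(I)$ on which $f|_M$ is Fin$(M)$-convergent, and then to $f|_M$ being Cauchy with respect to $\nu$, is exactly what the definition of $I_\nu^*$-Cauchy requires, and your care in choosing the witness $m$ inside $M$ via the cofinite set $E$ is the right detail. There is no gap.

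As for comparison, the paper does not supply its own proof of this theorem: it is simply quoted from \cite{KV} with no accompanying argument. So there is nothing in the paper to compare your approach against. Your self-contained derivation (including the optional direct $\epsilon$--$r$ argument avoiding Theorem~\ref{6chth5.1}) is more than the paper itself offers here.
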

\begin{theorem}\cite{KV}\label{6chth5.4}
Let $(P,\nu,*)$be a PNS and the ideal $I$ on the set $S$ satisfy the AP-condition. If a function $f$ is any $I_\nu$-Cauchy function then $f$ is also $I_\nu^*$-Cauchy function.
\end{theorem}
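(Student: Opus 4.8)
The plan is to imitate the classical proof that, for ideals of $\mathbb N$, the AP condition upgrades $I$-convergence to $I^*$-convergence, while carrying along the extra parameter $t$ attached to the distribution functions and the triangle function $*$. Here ``AP-condition'' means $\mathrm{AP}(I,\mathrm{Fin})$: for every sequence of pairwise disjoint sets $A_n\in I$ there are $B_n\in I$ with $A_n\bigtriangleup B_n$ finite for each $n$ and $\bigcup_n B_n\in I$; as in the surrounding section I take $I$ to be admissible.

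First I would compress the $I_\nu$-Cauchy hypothesis into a single sequence of ``small'' sets. For each $j\in\mathbb N$, the hypothesis applied with $\epsilon=t=1/j$ yields a point $m_j\in S$ with $A_j:=\{s\in S:\nu_{f(s)-f(m_j)}(1/j)\le 1-1/j\}\in I$. The useful elementary observation is that, since every $\nu_x$ is non-decreasing, for arbitrary $\epsilon,t>0$ and any $j$ with $1/j\le\min\{\epsilon,t\}$ we have $\{s\in S:\nu_{f(s)-f(m_j)}(t)\le 1-\epsilon\}\subseteq A_j$; this lets the single sequence $(A_j)$ govern all pairs $(\epsilon,t)$. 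I would then disjointify, putting $D_1:=A_1$ and $D_j:=A_j\setminus\bigcup_{i<j}A_i$, so that the $D_j\in I$ are pairwise disjoint and $\bigcup_{i\le j}D_i=\bigcup_{i\le j}A_i$ for every $j$.

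Next I would apply the AP-condition to $(D_j)$ to get $B_j\in I$ with $D_j\bigtriangleup B_j$ finite for each $j$ and $B:=\bigcup_j B_j\in I$, and set $M:=S\setminus B\in F(I)$, which is infinite since $I$ is admissible and non-trivial. It then remains to check that $f|_M$ is a Cauchy function with respect to $\nu$. So fix $\epsilon,t>0$; choose $r>0$ with $(1-r)*(1-r)\ge 1-\epsilon/2$ (possible exactly as in the uniqueness argument for the $I_\nu^K$-limit), pick $j$ with $1/j\le\min\{r,t/2\}$, and put $F:=\bigcup_{i\le j}(D_i\bigtriangleup B_i)$, a finite set. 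For $s\in M\setminus F$ I would show $\nu_{f(s)-f(m_j)}(t/2)>1-r$: otherwise monotonicity gives $\nu_{f(s)-f(m_j)}(1/j)\le 1-1/j$, so $s\in A_j\subseteq\bigcup_{i\le j}D_i$; hence $s\in D_{i_0}$ for some $i_0\le j$, and since $s\notin F$ this forces $s\in B_{i_0}\subseteq B$, contradicting $s\in M$.

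Finally, fixing any $m^*\in M\setminus F$ (this set is nonempty, $M$ being infinite and $F$ finite), for each $s\in M\setminus F$ axioms (iv) and (iii) of the probabilistic norm give $\nu_{f(s)-f(m^*)}(t)\ge\nu_{f(s)-f(m_j)}(t/2)*\nu_{f(m_j)-f(m^*)}(t/2)\ge(1-r)*(1-r)\ge 1-\epsilon/2>1-\epsilon$, whence $\{s\in M:\nu_{f(s)-f(m^*)}(t)\le 1-\epsilon\}\subseteq F$ is finite; thus $f|_M$ is $\nu$-Cauchy and $f$ is $I_\nu^*$-Cauchy. The step I expect to be the main obstacle is this last one: unlike in a metric space one cannot simply re-center a Cauchy estimate, so the auxiliary point $m_j$ (which need not lie in $M$) must be traded for an honest reference point inside $M$, and this exchange has to pass through the triangle function $*$ — which is precisely where the choice of $r$ and axiom (iv) are used. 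Keeping track of the pair $(\epsilon,t)$ via monotonicity of distribution functions is the other point needing attention, but it is routine once the observation in the second paragraph is set up.
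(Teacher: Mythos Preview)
The paper does not supply a proof of this theorem: it is stated with the citation \cite{KV} and then used as a black box later (e.g.\ in the proof of Theorem~5.9). So there is no ``paper's own proof'' to compare against here.

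Your argument is correct and is exactly the standard one (Kostyrko--\v{S}al\'at--Wilczy\'nski for $I$- versus $I^*$-convergence, transported to the Cauchy/PNS setting as in \cite{KV}): encode the $I_\nu$-Cauchy condition along the countable scale $\epsilon=t=1/j$, disjointify, apply the AP-condition to produce a single $B\in I$ and set $M=S\setminus B\in F(I)$, and then verify that $f|_M$ is $\nu$-Cauchy. The two points you single out are handled cleanly: the monotonicity observation $\{s:\nu_{f(s)-f(m_j)}(t)\le 1-\epsilon\}\subseteq A_j$ whenever $1/j\le\min\{\epsilon,t\}$ is precisely what lets a single sequence $(A_j)$ absorb the two-parameter family $(\epsilon,t)$, and the exchange of the auxiliary reference point $m_j$ (which need not lie in $M$) for a genuine $m^*\in M\setminus F$ via axiom~(iv) is the right manoeuvre and is carried out correctly. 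Your use of $\epsilon/2$ (rather than $\epsilon$) when choosing $r$ so that $(1-r)*(1-r)\ge 1-\epsilon/2$ is also the right call, since it is what guarantees the \emph{strict} inequality $\nu_{f(s)-f(m^*)}(t)>1-\epsilon$ needed to place $s$ outside the ``bad'' set.
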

\begin{lemma}\label{6chl5.01}
Let $I_1$, $I_2$ be ideals on a set $S$ such that $I_1\subseteq I_2$. Let $P$ be a PNS. If $f:S\rightarrow P$ is $I_1$-Cauchy then it is also $I_2$-Cauchy with respect to probabilistic norm $\nu$.
\end{lemma}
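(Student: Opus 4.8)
The plan is to unwind the definition of $I_1$-Cauchy directly and then invoke the inclusion $I_1 \subseteq I_2$, exactly in the spirit of Lemma \ref{6chl1}(ii). Since both notions are defined by requiring membership of a single set (depending on $\epsilon$, $t$, and a witness $m \in S$) in the respective ideal, passing from the smaller ideal to the larger one is immediate.

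Concretely, first I would fix arbitrary $\epsilon > 0$ and $t > 0$. Since $f$ is $I_1$-Cauchy with respect to $\nu$, the definition supplies an element $m \in S$ such that
$$A(\epsilon,t,m) := \{s \in S : \nu_{f(s)-f(m)}(t) \leq 1-\epsilon\} \in I_1.$$
Next I would use the hypothesis $I_1 \subseteq I_2$ to conclude $A(\epsilon,t,m) \in I_2$. Since the very same $m$ works and $\epsilon, t$ were arbitrary, the defining condition for $f$ being $I_2$-Cauchy with respect to $\nu$ is satisfied, which completes the argument.

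There is essentially no obstacle here: the statement is a one-line consequence of the definition together with monotonicity of ideal membership, and it parallels the analogous statement for $I_\nu$-convergence in Lemma \ref{6chl1}(ii). The only point worth a remark is that the witness $m$ need not be changed when enlarging the ideal, so no additional construction (e.g. no use of the $t$-norm, the triangle inequality (iv), or any filter argument) is required.
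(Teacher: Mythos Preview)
Your argument is correct and is exactly the obvious verification the paper has in mind; indeed, the paper simply writes ``The proof is straightforward so omitted.'' Your unpacking of the definition and use of $I_1\subseteq I_2$ with the same witness $m$ is precisely that straightforward proof.
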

\begin{proof}
The proof is straightforward so omitted.
\end{proof}
\begin{definition}
Let $S$ be an arbitrary set and  $(P,\nu,*)$ be a PNS. Let $I, K$ be ideals on the set $S$. A function $f:S\rightarrow P$ is said to be $I_\nu^K$-Cauchy if there is a subset $M\subseteq S$ such that $M\in F(I)$ and the function $f|_M$ is $K_\nu|_M$-Cauchy.
\end{definition}
\begin{note}
If $I$ is an ideal on $S$ and $M\subseteq S$ then we denote by $I|_M$ the trace of the ideal $I$ on the subset $M$ i.e. $I|_M=\{A\cap M:A\in I\}$ and the dual filter is   $F(I|_M)=\{G\cap M: G\in F(I)\}$
\end{note}
If $K=$Fin we obtain the notion of $I_\nu^*$-Cauchy functions. It is relatively easy to see directly from definition that every $I_\nu^K$-convergent function is $I_\nu^K$-Cauchy.
\begin{lemma}\label{6chl5.1}
If $I$ and $K$ are ideals on an arbitrary set $S$ and $P$ be a PM space and a function $f:S\rightarrow P$ is $K_\nu$-Cauchy then it is also $I_\nu^K$-Cauchy.
\end{lemma}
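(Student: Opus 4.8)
\textbf{Proof proposal for Lemma \ref{6chl5.1}.}

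The plan is to mimic the proof of Lemma \ref{6ch0}: take the witnessing set for the $I$-part to be all of $S$. Concretely, set $M = S$. Then trivially $M \in F(I)$, since $S \setminus M = \emptyset \in I$ and $I$ is nontrivial. Moreover, for this choice of $M$ the trace ideal $K|_M = \{A \cap S : A \in K\} = K$, so $K_\nu|_M$-Cauchyness of $f|_M = f$ is exactly $K_\nu$-Cauchyness of $f$, which holds by hypothesis.

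Thus I would argue: since $f$ is $K_\nu$-Cauchy, for each $\epsilon(>0)$ and $t(>0)$ there exists $m \in S$ with $\{s \in S : \nu_{f(s)-f(m)}(t) \leq 1-\epsilon\} \in K$. Taking $M = S$, the restriction $f|_M$ is $f$ itself and the set above lies in $K = K|_M$, so $f|_M$ is $K_\nu|_M$-Cauchy. By the definition of $I_\nu^K$-Cauchy with witness $M = S \in F(I)$, it follows that $f$ is $I_\nu^K$-Cauchy.

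There is essentially no obstacle here; the only thing to note carefully is that $M = S$ genuinely belongs to $F(I)$ (which requires only that $I$ is a proper/nontrivial ideal, as assumed throughout the paper) and that the trace of $K$ on $S$ is $K$ itself, so no element $m$ needs to be modified. I would keep the write-up to one or two sentences, exactly paralleling the proof of Lemma \ref{6ch0}.
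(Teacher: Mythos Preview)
Your proposal is correct and matches the paper's proof essentially verbatim: both take $M=S\in F(I)$, observe that $K|_M=K$, and conclude that $f|_M=f$ being $K_\nu$-Cauchy yields $I_\nu^K$-Cauchy directly.
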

\begin{proof}
	If we take $M=S$ then $M\in F(I)$. In this case $K|_M=K$, hence $f$ is $K_\nu|_M$-Cauchy. This shows that $f$ is $I_\nu^K$-Cauchy.
\end{proof}
\begin{theorem}\label{6chth5.3}
Let $(P,\nu,*)$be a PNS and $I, K$ be two admissible ideals on a non empty set $S$ and  $f:S\rightarrow P$ be a function. If $f$ is $I_\nu^*$-Cauchy function then $f$ is $I_\nu^K$-Cauchy function.
\end{theorem}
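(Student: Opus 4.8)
The plan is to reuse the witnessing set coming from the hypothesis. Since $f$ is $I_\nu^*$-Cauchy, by definition there is a set $M\in F(I)$ such that $f|_M$ is a Cauchy function on $M$ with respect to the probabilistic norm $\nu$. I would keep this very same $M$ as the witness for $I_\nu^K$-Cauchyness, so that the whole argument reduces to showing that $f|_M$ is $K_\nu|_M$-Cauchy.

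First I would observe that the trace $K|_M=\{A\cap M:A\in K\}$ is again an admissible ideal, now on the underlying set $M$: indeed, for every $m\in M$ we have $\{m\}=\{m\}\cap M\in K|_M$ because $\{m\}\in K$ by admissibility of $K$ on $S$, and the two ideal axioms for $K|_M$ are inherited from those of $K$. In the degenerate situation $M\in K$ the trace collapses to $2^M$, but then the conclusion below holds trivially, so this case causes no trouble.

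Next, I would apply Proposition \ref{6chpro5.1} in the restricted setting: to the PNS $(P,\nu,*)$, the base set $M$, the admissible ideal $K|_M$ on $M$, and the function $f|_M:M\to P$. Since $f|_M$ is a Cauchy function with respect to $\nu$, Proposition \ref{6chpro5.1} yields that $f|_M$ is $(K|_M)_\nu$-Cauchy, i.e.\ $K_\nu|_M$-Cauchy. Thus there exists $M\in F(I)$ with $f|_M$ being $K_\nu|_M$-Cauchy, which is precisely the assertion that $f$ is $I_\nu^K$-Cauchy.

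There is essentially no hard analytic step here; the only point requiring care is the bookkeeping with traces of ideals and their dual filters — in particular verifying that admissibility passes from $K$ to $K|_M$ and that $F(K|_M)=\{G\cap M:G\in F(K)\}$ behaves as expected. One could alternatively try to route through Lemma \ref{6chl5.1} (every $K_\nu$-Cauchy function is $I_\nu^K$-Cauchy), but since the hypothesis only provides Cauchyness of the restriction $f|_M$, not of $f$ itself, one is forced to localise to $M$ in any case, so the argument above via Proposition \ref{6chpro5.1} applied on $M$ is the most economical; compare the proof of Theorem \ref{6chth11}, which is the convergence analogue of this statement.
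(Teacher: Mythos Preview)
Your argument is correct and follows essentially the same route as the paper's proof: take the witnessing set $M\in F(I)$ from $I_\nu^*$-Cauchyness, then use Proposition \ref{6chpro5.1} to upgrade ordinary Cauchyness of $f|_M$ to $K_\nu|_M$-Cauchyness, which is the definition of $I_\nu^K$-Cauchy. Your version is in fact a bit more careful than the paper, which writes ``$f|_B$ is $K_\nu$-Cauchy'' without explicitly passing to the trace $K|_B$ or checking its admissibility on $B$, as you do.
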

\begin{proof}
Let $f$ be a $I_\nu^*$-Cauchy function then from the definition we have there is a subset $B\subseteq S$ such that $B\in F(I)$ and the function $f|_B$ is a Cauchy function with respect to probabilistic norm $\nu$. Again by Proposition \ref{6chpro5.1} we have $f|_B$ is a $K_\nu $-Cauchy function. Therefore $f$ is a $I_\nu^K$-Cauchy function. 
\end{proof}
\begin{theorem}
Let $(P,\nu,*)$be a PNS and $I, K$ be two admissible ideals on a non empty set $S$ and  $f:S\rightarrow P$ be a function. If $f$ is $I_\nu^K$-convergent function then $f$ is $I_\nu^K$-Cauchy function.
\end{theorem}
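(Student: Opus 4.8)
The plan is to unwind the two definitions involved and reduce the assertion to the already-established fact (Theorem~\ref{6chth5.1}) that in a PNS every $I_\nu$-convergent function is $I_\nu$-Cauchy, applied not to the ideal $K$ on $S$ but to the trace ideal $K|_M$ on a suitable subset $M\in F(I)$. First I would fix $p\in P$ with $I_\nu^K$-$\lim f=p$. By the reformulation recorded in the Remark after the definition of $I_\nu^K$-convergence, this means there is a set $M\in F(I)$ such that the restriction $f|_M\colon M\to P$ is $K_\nu|_M$-convergent to $p$, where $K|_M=\{A\cap M:A\in K\}$ is an ideal on $M$.

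Next I would regard $M$ as the ambient set and $K|_M$ as an ideal on it, and apply Theorem~\ref{6chth5.1} to the function $f|_M$. Since that theorem holds for an arbitrary ideal, the $K_\nu|_M$-convergence of $f|_M$ to $p$ forces $f|_M$ to be $K_\nu|_M$-Cauchy; explicitly, for each $\epsilon(>0)$ and $t(>0)$ there exists $m\in M$ with $\{s\in M:\nu_{f(s)-f(m)}(t)\leq 1-\epsilon\}\in K|_M$. But the same set $M$ belongs to $F(I)$, so the definition of an $I_\nu^K$-Cauchy function is satisfied with witness $M$, and hence $f$ is $I_\nu^K$-Cauchy, which is what we wanted.

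As for difficulty, there is essentially no obstacle here: as the sentence preceding the theorem already notes, the statement is a direct consequence of the definitions. The only point requiring a line of care is the passage to the trace ideal $K|_M$ on $M$ --- one must check that Theorem~\ref{6chth5.1} is being invoked with the correct ambient set ($M$, not $S$) and the correct ideal ($K|_M$), and observe that the convergence-implies-Cauchy implication needs no admissibility hypothesis, so no issue arises even though $K|_M$ need not be compared with the admissibility condition. A slightly more computational alternative would bypass the reformulation: apply Theorem~\ref{6chth5.1} directly to the auxiliary function $g$ (equal to $f$ on $M$ and to $p$ off $M$) to conclude $g$ is $K_\nu$-Cauchy on $S$, then descend to $f|_M$ by intersecting the relevant sets with $M$; I would prefer the trace-ideal route as it is cleaner and matches the phrasing of the definitions.
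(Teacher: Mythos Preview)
Your proposal is correct and follows essentially the same route as the paper: pick $M\in F(I)$ with $f|_M$ $K_\nu|_M$-convergent, invoke Theorem~\ref{6chth5.1} for the ideal $K|_M$ on $M$ to get $f|_M$ $K_\nu|_M$-Cauchy, and read off the definition of $I_\nu^K$-Cauchy. The paper's argument is identical up to notation (it uses $B$ for your $M$), and your remarks about the trace-ideal formulation and the alternative via the auxiliary $g$ are accurate but not needed.
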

\begin{proof}
Let $f$ be $I_\nu^K$-convergent function then by definition we have there exists $B \in F(I)$ such that the function $f|_B$ is $K|_B$-convergent function with respect to probabilistic norm $\nu$ where $K|_B=\{A\cap B:A\in K\}$. Again by Theorem \ref{6chth5.1} we have $f|_B$ is $K|_B$-Cauchy with respect to probabilistic norm $\nu$. Therefore $f$ is $I_\nu^K$-Cauchy function.
\end{proof}
\begin{theorem}
Let $(P,\nu,*)$be a PNS and $I, K$ be two admissible ideals on a non empty set $S$ and  $f:S\rightarrow P$ be a function. If $f$ is $I_\nu^*$-convergent function then $f$ is $I_\nu^K$-Cauchy function.
\end{theorem}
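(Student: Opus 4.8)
The plan is to chain together two implications that are already available in the excerpt, so that essentially nothing new has to be proved. Recall that Theorem \ref{6chth5.2} asserts that every $I_\nu^*$-convergent function is $I_\nu^*$-Cauchy, and Theorem \ref{6chth5.3} asserts that, for admissible ideals $I$ and $K$, every $I_\nu^*$-Cauchy function is $I_\nu^K$-Cauchy. Composing these gives the claim at once: if $f$ is $I_\nu^*$-convergent, then $f$ is $I_\nu^*$-Cauchy by the former, hence $I_\nu^K$-Cauchy by the latter.

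There is also a second, equally short route that bypasses the $I_\nu^*$-Cauchy notion: by Theorem \ref{6chth11}, $I_\nu^*$-convergence of $f$ implies $I_\nu^K$-convergence of $f$, and by the theorem immediately preceding the present one (every $I_\nu^K$-convergent function is $I_\nu^K$-Cauchy), we again obtain that $f$ is $I_\nu^K$-Cauchy. I would present the first route as the main argument and perhaps note the second, since the first keeps the proof inside the circle of Cauchy-type statements developed in this section.

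If one preferred a self-contained argument, I would unwind the definitions. The hypothesis $I_\nu^*$-$\lim f = p$ produces a set $M \in F(I)$ for which the modified function $g$ (equal to $f$ on $M$ and to $p$ off $M$) is Fin$(S)$-convergent to $p$ with respect to $\nu$; such a function is Fin$(S)$-Cauchy by the usual triangle-inequality argument using axiom (iv) together with a choice of $r>0$ with $(1-r)*(1-r)\geq 1-\epsilon$, exactly as in the uniqueness proof. Restricting to $M$, the function $f|_M = g|_M$ is Fin$(M)$-Cauchy, and since $K$ is admissible we have $\mathrm{Fin}(M) \subseteq K|_M$, so $f|_M$ is $K_\nu|_M$-Cauchy, which is precisely the witness that $f$ is $I_\nu^K$-Cauchy.

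Since the principal approach is a two-line composition of previously proved theorems, there is essentially no obstacle here; the only point requiring a modicum of care is to check that the admissibility hypotheses needed by Theorem \ref{6chth5.3} (and, in the alternative route, by the preceding $I_\nu^K$-convergence $\Rightarrow$ $I_\nu^K$-Cauchy theorem) are in force, which they are by the assumption that both $I$ and $K$ are admissible. Thus the ``hard part'' amounts to bookkeeping of hypotheses rather than any genuine mathematical difficulty.
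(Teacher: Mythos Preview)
Your main argument is exactly the paper's own proof: it simply states that the result follows directly from Theorem \ref{6chth5.2} and Theorem \ref{6chth5.3}, i.e., $I_\nu^*$-convergent $\Rightarrow$ $I_\nu^*$-Cauchy $\Rightarrow$ $I_\nu^K$-Cauchy. Your alternative route and self-contained unwinding are also correct, but the paper does not include them.
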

\begin{proof}
The proof directly follows from the Theorem \ref{6chth5.2} and Theorem \ref{6chth5.3}.
\end{proof}

\begin{lemma}
Let $I,J,K$ and $L$ be ideals on a set $S$ such that $I\subseteq J$ and $K\subseteq L$ and let $P$ be a PNS. Then for any function $f:S\rightarrow P$, we have\\
(i) $I_\nu^K$-Cauchy ~~~ $\Rightarrow$ ~~~ $J_\nu^K$-Cauchy and\\
(ii) $I_\nu^K$-Cauchy ~~~ $\Rightarrow$ ~~~ $I_\nu^L$-Cauchy.
\end{lemma}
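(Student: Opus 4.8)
The plan is to mirror, for the Cauchy condition, the argument used for Proposition \ref{6chpro1}, working directly from the witnessing-set formulation of $I_\nu^K$-Cauchyness and invoking Lemma \ref{6chl5.01} for the trace ideals.

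For part (i): assume $f$ is $I_\nu^K$-Cauchy. By definition there is a set $M\subseteq S$ with $M\in F(I)$ such that $f|_M$ is $K_\nu|_M$-Cauchy. Since $I\subseteq J$ implies $F(I)\subseteq F(J)$, the same set $M$ lies in $F(J)$, and neither $K$ nor its trace $K|_M$ has changed, so $f|_M$ is still $K_\nu|_M$-Cauchy. Hence $M$ witnesses that $f$ is $J_\nu^K$-Cauchy. This step requires nothing beyond the monotonicity $F(I)\subseteq F(J)$.

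For part (ii): again take the witnessing set $M\in F(I)$ with $f|_M$ being $K_\nu|_M$-Cauchy, and keep the same $M$ (still in $F(I)$). Regard $f|_M$ as a function from the set $M$ into the PNS $P$; the relevant ideals on $M$ are the traces $K|_M$ and $L|_M$. From $K\subseteq L$ one gets at once $K|_M=\{A\cap M:A\in K\}\subseteq\{B\cap M:B\in L\}=L|_M$. Applying Lemma \ref{6chl5.01} with $I_1=K|_M$, $I_2=L|_M$ and base set $M$, the fact that $f|_M$ is $K|_M$-Cauchy gives that $f|_M$ is $L|_M$-Cauchy with respect to $\nu$. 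Therefore $M$ witnesses that $f$ is $I_\nu^L$-Cauchy.

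There is essentially no obstacle; the only point deserving a moment's care is the legitimacy of passing from $K\subseteq L$ on $S$ to $K|_M\subseteq L|_M$ on $M$, and the fact that Lemma \ref{6chl5.01} applies verbatim with $M$ in the role of the base set. Both are immediate from the definition of the trace ideal and its dual filter recorded in the Note preceding that lemma.
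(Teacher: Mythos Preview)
Your proof is correct and follows essentially the same approach as the paper: for (i) you use $F(I)\subseteq F(J)$ to transfer the witnessing set $M$, and for (ii) you pass from $K\subseteq L$ to $K|_M\subseteq L|_M$ and invoke Lemma \ref{6chl5.01} on the base set $M$. The paper's proof is identical in structure, only more terse.
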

\begin{proof}
(i) If $f:S\rightarrow P$ is $I_\nu^K$-Cauchy then there is a subset $M\in F(I)$ such that $f|_M$ is $K_\nu|_M$-Cauchy. Since $F(I)\subseteq F(J)$, we have $M\in F(J)$. This means that $f$ is also $J_\nu^K$-Cauchy.\\
(ii) As $K\subseteq L$ implies $K|_M\subseteq L|_M$. From Lemma \ref{6chl5.01} we get that if $f|_M$ is $K_\nu|_M$-Cauchy then it is also $L_\nu|_M$-Cauchy i.e. $f$ is $I_\nu^L$-Cauchy.
\end{proof}
\begin{theorem}
Let $f:S\rightarrow P$ be a function, $I, K$ be ideal on the arbitrary set $S$ and $P$ be a PM space. then\\
(i) $f$ is $I_\nu$-Cauchy if and only if it is $I_\nu^I$-Cauchy. and\\
(ii) $f$ is $I_\nu^K$-Cauchy if and only if it is $(I\vee K)_\nu^K$-Cauchy.
\end{theorem}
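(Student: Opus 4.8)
The plan is to mirror the proof of the convergence analogue (the theorem comparing $I_\nu$- with $I_\nu^I$-convergence and $I_\nu^K$- with $(I\vee K)_\nu^K$-convergence), simply replacing ``convergent'' by ``Cauchy'' and carrying the Cauchy witness $m$ along through the set manipulations.

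For part (i), the forward implication is essentially free: take $M=S\in F(I)$, so that $I|_M=I$ and the restriction $f|_M=f$ is $I_\nu|_M$-Cauchy; this is exactly the statement $I_\nu$-Cauchy $\Rightarrow I_\nu^I$-Cauchy, and it is also an instance of Lemma~\ref{6chl5.1} with the outer and inner ideals both equal to $I$. For the converse I would start from an $M\in F(I)$ witnessing $I_\nu^I$-Cauchyness of $f$, fix $\epsilon,t>0$, and produce $m\in M\subseteq S$ with $\{s\in M:\nu_{f(s)-f(m)}(t)>1-\epsilon\}\in F(I|_M)$, i.e.\ equal to $G\cap M$ for some $G\in F(I)$. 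Since $G\cap M\in F(I)$ and $G\cap M$ is contained in the full ``good'' set $\{s\in S:\nu_{f(s)-f(m)}(t)>1-\epsilon\}$, that full set lies in $F(I)$, whence $\{s\in S:\nu_{f(s)-f(m)}(t)\leq 1-\epsilon\}\in I$; as $\epsilon,t$ are arbitrary, $f$ is $I_\nu$-Cauchy.

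For part (ii), the forward direction uses only $F(I)\subseteq F(I\vee K)$: a witnessing set $M\in F(I)$ for $I_\nu^K$-Cauchyness automatically lies in $F(I\vee K)$ and still witnesses $(I\vee K)_\nu^K$-Cauchyness. For the converse I would take $M\in F(I\vee K)$ with $f|_M$ being $K_\nu|_M$-Cauchy and decompose $M=M_1\cap M_2$ with $M_1\in F(I)$, $M_2\in F(K)$, using the description of $F(I\vee K)$ recalled earlier. Given $\epsilon,t>0$, pick $m\in M\subseteq M_1$ with $\{s\in M:\nu_{f(s)-f(m)}(t)>1-\epsilon\}=G\cap M$ for some $G\in F(K)$; then
$$\{s\in M_1:\nu_{f(s)-f(m)}(t)>1-\epsilon\}\supseteq G\cap M=(G\cap M_2)\cap M_1 ,$$
and $(G\cap M_2)\cap M_1\in F(K|_{M_1})$ because $G\cap M_2\in F(K)$. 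Since any subset of $M_1$ containing a member of the filter $F(K|_{M_1})$ again belongs to $F(K|_{M_1})$, the left-hand set is in $F(K|_{M_1})$, so $f|_{M_1}$ is $K_\nu|_{M_1}$-Cauchy with $M_1\in F(I)$; that is, $f$ is $I_\nu^K$-Cauchy.

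The steps involving trace ideals and their dual filters are routine. The one point to watch is that the Cauchy condition is witnessed by an element $m$: I must make sure the $m$ found on the smaller index set $M$ (or $M_1$) still works as a witness on the larger set $S$ (resp.\ $M_1$), which is fine because passing to a larger index set only enlarges the ``good'' set $\{s:\nu_{f(s)-f(m)}(t)>1-\epsilon\}$ while it already contains the relevant filter member. I do not anticipate any real obstacle beyond this bookkeeping.
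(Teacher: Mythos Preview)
Your proposal is correct and follows essentially the same route as the paper: both directions of (i) use $M=S$ for the easy implication and then the inclusion $G\cap M\subseteq\{s\in S:\nu_{f(s)-f(m)}(t)>1-\epsilon\}$ with $G\cap M\in F(I)$ for the converse, while (ii) uses $F(I)\subseteq F(I\vee K)$ forward and the decomposition $M=M_1\cap M_2$ together with $C\cap M_1\supseteq (G\cap M_2)\cap M_1\in F(K|_{M_1})$ for the converse. Your extra care in tracking that the Cauchy witness $m$ lies in $M\subseteq M_1$ is a nice touch that the paper leaves implicit.
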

\begin{proof}
(i) Suppose that $f$ is $I_\nu$-Cauchy. Then for $M=S$ we have $I|_M=I$ which implies that $f|_M=f$ is also $I_\nu^I$-Cauchy.\\
Conversely, let $f$ is $I_\nu^I$-Cauchy so there is a set $M\in F(I)$ such that $f|_M$ is $I_\nu|_M$-Cauchy. Then for each $\epsilon (>0)$ and $t(>0)$ there exists an element $m\in S$ such that the set $\{s\in S:\nu_{f(s)-f(m)}(t)> 1-\epsilon \}\cap M\in F(I|_M)$. Let $C=\{s\in S:\nu_{f(s)-f(m)}(t)> 1-\epsilon \}$. So there exists $G\in F(I)$ such that $C=G\cap M$. Clearly $G\cap M\in F(I)$ and $G\cap M\subseteq \{s\in S:\nu_{f(s)-f(m)}(t)> 1-\epsilon \}$. So $\{s\in S:\nu_{f(s)-f(m)}(t)> 1-\epsilon \}\in F(I)$\\
(ii) Suppose that $f$ is $I_\nu^K$-Cauchy. Then there is a set $M\in F(I)$ such that $f|_M$ is $K_\nu|_M$-Cauchy. Clearly if $M\in F(I)$ then $M\in F(I\vee K)$. Therefore $f$ is also $(I\vee K)_\nu^K$-Cauchy.\\
Conversely, let $f$ is $(I\vee K)_\nu^K$-Cauchy i.e. there is a set $M\in F(I\vee K)$ such that $f|_M$ is $K_\nu|_M$-Cauchy. Then for each $\epsilon (>0)$ and $t(>0)$ there exists an element $m\in S$ such that the set $\{s\in S:\nu_{f(s)-f(m)}(t)> 1-\epsilon \}\cap M\in F(K|_M)$. Let $C=\{s\in S:\nu_{f(s)-f(m)}(t)> 1-\epsilon \}$ So there exists $G\in F(K)$ such that
$$C\cap M=G\cap M$$
Since $M\in F(I\vee K)$, then $M=M_1\cap M_2$ for some $M_1\in F(I)$ and $M_2\in F(K)$. Now we have $$C\cap M_1\supseteq C\cap M=(G\cap M_2)\cap M_1$$
Since $G\cap M_2\in F(K)$, this shows that $C\cap M_1\in F(K|_{M_1})$. So $f$ is $I_\nu^K$-Cauchy.
\end{proof}
\begin{theorem}
Let $I , K$ be two admissible ideals on a non empty set $S$ and $(P,\nu,*)$ be a PN-space and the ideal $I$ has the additive property with respect to $K$ i.e. AP$(I,K)$ condition holds. If a function $f:S\rightarrow P$ is any $I_\nu$-Cauchy function then $f$ is also $I_\nu^K$-Cauchy function.
\end{theorem}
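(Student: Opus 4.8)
The plan is to adapt the argument of Theorem~\ref{6chth5.4} by systematically replacing ``finite'' with ``member of $K$'', the one genuinely new subtlety being that the Cauchy reference point supplied by the hypothesis need not lie inside the witness set we build. First I would extract, from the $I_\nu$-Cauchy hypothesis applied with $\epsilon=1/n$ and $t=1/n$, for each $n\in\mathbb N$ a point $m_n\in S$ with
\[
A_n:=\{s\in S:\nu_{f(s)-f(m_n)}(1/n)\le 1-1/n\}\in I .
\]
Since $\nu_x$ is non-decreasing, knowing $\nu_{f(s)-f(m_n)}(1/n)>1-1/n$ for one large $n$ already forces $\nu_{f(s)-f(m_n)}(t)>1-\epsilon$ for any prescribed pair $(\epsilon,t)$ with $1/n\le\min\{\epsilon,t\}$, so this diagonal choice suffices. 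As $m_n$ varies with $n$ the sets $A_n$ need not be nested, so I would pass to the increasing family $\widetilde A_n:=A_1\cup\cdots\cup A_n\in I$ and disjointify it: $D_1=\widetilde A_1$ and $D_n=\widetilde A_n\setminus\widetilde A_{n-1}$ for $n\ge2$. The $D_n$ are pairwise disjoint, each $D_n\subseteq\widetilde A_n\in I$, and $\bigcup_{j\le n}D_j=\widetilde A_n$.

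Next I would invoke the AP$(I,K)$ condition on the pairwise disjoint sequence $(D_n)$: there are $B_n\in I$ with $D_n\bigtriangleup B_n\in K$ for every $n$ and $B:=\bigcup_n B_n\in I$. Set $M:=S\setminus B\in F(I)$; this is the witness set. For each fixed $n$, from $\widetilde A_n\bigtriangleup\bigl(\bigcup_{j\le n}B_j\bigr)\subseteq\bigcup_{j\le n}(D_j\bigtriangleup B_j)\in K$ and $B\supseteq\bigcup_{j\le n}B_j$ I obtain $\widetilde A_n\setminus B\in K$, hence $A_n\cap M=A_n\setminus B\in K$; and since this set is contained in $M$, it belongs to $K|_M$.

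Finally, to verify that $f|_M$ is $K_\nu|_M$-Cauchy, fix $\epsilon>0$ and $t>0$, choose $r>0$ with $(1-r)*(1-r)\ge1-\epsilon$, and then $n$ with $1/n<\min\{r,t/2\}$. Because $A_n\in I$ and $M\in F(I)$, the set $M\setminus A_n\in F(I)$ is non-empty; fix $m^\ast\in M\setminus A_n$. For any $s\in M\setminus A_n$, both $s$ and $m^\ast$ lie outside $A_n$, so $\nu_{f(s)-f(m_n)}(t/2)\ge\nu_{f(s)-f(m_n)}(1/n)>1-1/n>1-r$ and, using $\nu_{-x}=\nu_x$, likewise $\nu_{f(m_n)-f(m^\ast)}(t/2)>1-r$; the triangle inequality (axiom (iv)) then gives $\nu_{f(s)-f(m^\ast)}(t)\ge\nu_{f(s)-f(m_n)}(t/2)*\nu_{f(m_n)-f(m^\ast)}(t/2)>(1-r)*(1-r)\ge1-\epsilon$. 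Therefore $\{s\in M:\nu_{f(s)-f(m^\ast)}(t)\le1-\epsilon\}\subseteq A_n\cap M\in K|_M$, and since $m^\ast\in M$ and $\epsilon,t$ were arbitrary, $f|_M$ is $K_\nu|_M$-Cauchy; that is, $f$ is $I_\nu^K$-Cauchy.

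The step I expect to be the main obstacle is the last one: unlike in the convergence analogue, the Cauchy ``centre'' $m_n$ delivered by the hypothesis is merely an element of $S$, not of the constructed set $M$, so it cannot serve directly as the centre for $f|_M$; the remedy is to replace it by a nearby point $m^\ast\in M\setminus A_n$ and absorb the error through the triangle function, which forces the split $t=t/2+t/2$ and the choice of $r$ with $(1-r)*(1-r)\ge1-\epsilon$, exactly the device used in the uniqueness theorem. I would also note that one cannot shortcut through Theorem~\ref{6chth5.4} followed by Theorem~\ref{6chth5.3}, since AP$(I,K)$ is weaker than the AP$(I,\mathrm{Fin})$ condition those results require, so the direct construction above is genuinely needed.
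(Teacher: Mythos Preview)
Your proof is correct and takes a genuinely different route from the paper. The paper disposes of the theorem in two lines: it invokes Theorem~\ref{6chth5.4} to pass from $I_\nu$-Cauchy to $I_\nu^*$-Cauchy, and then Theorem~\ref{6chth5.3} to reach $I_\nu^K$-Cauchy. As you note at the end, this shortcut is problematic: Theorem~\ref{6chth5.4} is stated under the classical AP-condition, i.e.\ AP$(I,\mathrm{Fin})$, whereas the present hypothesis is only AP$(I,K)$, which for admissible $K\supsetneq\mathrm{Fin}$ is strictly weaker and does not yield an $I_\nu^*$-Cauchy conclusion. Your direct construction---disjointify the Cauchy level-sets $A_n$, apply AP$(I,K)$ to obtain the witness $M=S\setminus B\in F(I)$, deduce $A_n\cap M\in K|_M$ via the symmetric-difference bound, and then re-centre from $m_n\in S$ to some $m^\ast\in M\setminus A_n$ through the triangle function---uses only AP$(I,K)$ and therefore actually establishes the theorem as stated; it also makes explicit the one place where the Cauchy case needs more than the convergence case. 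One small refinement: to secure the \emph{strict} inequality $\nu_{f(s)-f(m^\ast)}(t)>1-\epsilon$ at the end, choose $r$ with $(1-r)*(1-r)>1-\epsilon$, or equivalently run the estimate for some $\epsilon'<\epsilon$; the paper's own uniqueness argument has the same minor looseness.
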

\begin{proof}
If the AP$(I,K)$ condition holds, then from Theorem \ref{6chth5.4} we have a function $f:S\rightarrow P$ is any $I_\nu$-Cauchy function then $f$ is also $I_\nu^*$-Cauchy function. Again from the Theorem \ref{6chth5.3} we have  $f$ is $I_\nu^*$-Cauchy function then $f$ is $I_\nu^K$-Cauchy function. Hence $f$ is $I_\nu^K$-Cauchy function. 
\end{proof}

\section{$I_\nu^K$-Limit points}
Since we are dealing with functions, we modify the definition of $I_{\nu}$-limit points which is given in \cite{MuM} in the following way:
\begin{definition}
Let $f:S\rightarrow P$ be a function and $I$  be non-trivial ideal of $S$. Then $q\in P$ is called a $I_\nu$-limit point of $f$ if there exists a set $M\subset S$ such that $M\notin I$ and the function $g:S\rightarrow P$ defined by
\[g(s)=\left\{\begin {array}{ll}
f(s) & \mbox{if $s\in M$} \\
q & \mbox{if $s\notin M$}
\end{array}
\right. \] \\
is Fin$(S)$-convergent to $q$ with respect to probabilistic norm $\nu$.
\end{definition}
In the definition of $I_\nu^K$-limit point we simply replace the ideal Fin by an arbitrary ideal $K$ on the set $S$.
\begin{definition}
Let $f:S\rightarrow P$ be a function and $I, K$  be two non-trivial ideals of $S$. Then $q\in P$ is called a $I_\nu^K$-limit point of $f$ if there exists a set $M\subset S$ such that $M\notin I, K$ and the function $g:S\rightarrow P$ defined by
\[g(s)=\left\{\begin {array}{ll}
f(s) & \mbox{if $s\in M$} \\
q & \mbox{if $s\notin M$}
\end{array}
\right. \] \\
is $K_\nu$-convergent to $q$.
\end{definition}
We denote respectively by $I_\nu(\Lambda_s(f))$ and $I^K_\nu(\Lambda_s(f))$ the collection of all $I_\nu$ and $I_\nu^K$-limit points of $f$.\\
If the function $f:S\rightarrow P$ is $I^K_\nu$-convergent to $p$ then , by definition, there exists $M\in F(I)$ such that the function $g:S\rightarrow P$ (given in the definition) is  $K_\nu$-convergent o $p$. This set $M\notin I$, since $I$ is nontrivial. Also this set $M\notin K$. For if $M\in K$ then $S\setminus M \in F(K)$. Since the dual filter $F(I\vee K)$ contains both $F(I)$ and $F(K)$. So $S\setminus M, M\in F(I\vee K)$ which is a contradiction. So, by definition, $p$ is a $I^K_\nu $-limit points of $f$.\\
On the other part, if $p$ is $I^K_\nu $-limit points of $f$ then , by definition,  there exists a set $M$ not belonging to $I$ and $K$ such that the function $g:S\rightarrow P$ (given in definition) is  $K_\nu $-convergent to $p$. In this case $M$ may not belong to $F(I)$. So $f$ may not be $I^K_\nu $-convergent to $p$.

\begin{theorem}
If $K$ is an admissible ideal and $K\subset I$ then $I_\nu(\Lambda_s(f))\subset I^K_\nu (\Lambda_s(f))$
\end{theorem}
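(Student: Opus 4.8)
The plan is to take an arbitrary $I_\nu$-limit point $q$ of $f$, produce the witnessing set from the definition, and check that the \emph{same} set and the \emph{same} auxiliary function $g$ already witness that $q$ is an $I^K_\nu$-limit point. So first I would let $q\in I_\nu(\Lambda_s(f))$; by definition there is a set $M\subset S$ with $M\notin I$ such that the function $g:S\to P$ given by $g(s)=f(s)$ for $s\in M$ and $g(s)=q$ for $s\notin M$ is $\mathrm{Fin}(S)$-convergent to $q$ with respect to $\nu$.

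Next I would verify the two membership conditions required in the definition of an $I^K_\nu$-limit point, namely $M\notin I$ and $M\notin K$. The first is immediate. For the second, since $K\subset I$, if $M\in K$ then $M\in I$, contradicting $M\notin I$; hence $M\notin K$. The remaining point is to upgrade the $\mathrm{Fin}(S)$-convergence of $g$ to $K_\nu$-convergence. Here I would invoke that $K$ is admissible: $K$ contains every singleton of $S$, and, being closed under finite unions, it contains every finite subset of $S$, i.e. $\mathrm{Fin}(S)\subseteq K$. Then Lemma~\ref{6chl1}(ii) applied with the ideal inclusion $\mathrm{Fin}(S)\subseteq K$ gives that $g$, being $\mathrm{Fin}(S)_\nu$-convergent to $q$, is also $K_\nu$-convergent to $q$. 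Combining: $M\notin I$, $M\notin K$, and the associated $g$ is $K_\nu$-convergent to $q$, which is exactly the definition of $q$ being an $I^K_\nu$-limit point of $f$. Therefore $I_\nu(\Lambda_s(f))\subseteq I^K_\nu(\Lambda_s(f))$.

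This argument is essentially bookkeeping; there is no real obstacle. The only spot requiring a moment's care is the passage from "admissible" to $\mathrm{Fin}(S)\subseteq K$ (so that the switch from $\mathrm{Fin}(S)$- to $K$-convergence is legitimate) together with the observation that the \emph{very same} witnessing set $M$ works, using $K\subset I$ to get $M\notin K$ for free. No new auxiliary function or set needs to be constructed, so the proof is short.
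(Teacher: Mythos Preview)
Your proposal is correct and follows essentially the same route as the paper: take the witnessing set $M\notin I$ and function $g$ from the definition of $I_\nu$-limit point, use $K\subset I$ to get $M\notin K$, and use admissibility of $K$ to pass from $\mathrm{Fin}(S)$-convergence to $K_\nu$-convergence of $g$. The only cosmetic difference is that the paper argues the last step directly (each $A(\epsilon,t)$ is finite, hence in $K$), whereas you phrase it via $\mathrm{Fin}(S)\subseteq K$ and Lemma~\ref{6chl1}(ii).
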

\begin{proof}
Let $q\in I_\nu(\Lambda_s(f))$. Since $q$ is a $I_\nu$-limit point of the function $f:S\rightarrow P$, then there exists a set $M\notin I$ such that and the function $g:S\rightarrow P$ defined by
\[g(s)=\left\{\begin {array}{ll}
f(s) & \mbox{if $s\in M$} \\
q & \mbox{if $s\notin M$}
\end{array}
\right. \] \\
is Fin$(S)$-convergent to $q$ with respect to probabilistic norm $\nu$. So for each $\epsilon (>0)$ and $t(>0)$ the set $A(\epsilon, t)=\{s\in S:\nu_{g(s)-q}(t)\leq 1-\epsilon \}$ is a finite set. So $A(\epsilon, t)\in K$, [as $K$ is an admissible ideal.] Therefore $g$ is $K_\nu$-convergent function. Again $M\notin I$ and $K\subset I$ so $M\notin I,K$. Thus $q$ is $I_\nu^K$-limit point of $f$ i.e. $q\in I_\nu^K(\Lambda_s(f))$. Hence the theorem is proved.
\end{proof}
\begin{note}
If $I$ is an admissible ideal and $I\subset K$ then $K_\nu(\Lambda_s(f))\subset I_\nu^K(\Lambda_s(f))$
\end{note}

\section{Acknowledgment}
The authors are grateful to DST, Govt of India for providing fund through FIST program in the Dept. of Mathematics, B.U. during the preparation of this work.

\end{document}